\newcommand{\R}{\mathbb{R}}
\newcommand{\N}{\mathbb{N}}
\newcommand{\wh}[1]{\widehat{#1}}
\newcommand{\mc}[1]{\mathcal{#1}}
\newcommand{\ov}[1]{\overline{#1}}
\newcommand{\Rank}[1]{\mathrm{rank}\,#1}
\newcommand{\Span}[1]{\mathrm{span}\,#1}
\newcommand{\Vc}{\mathrm{Vec}} 
\newcommand{\Pf}{\mathrm{Pf}}
\newcommand{\adjp}{\mathrm{adj}^\Pf}
\newcommand{\Vect}{\mathrm{Vec}}
\newtheorem{thm}{Theorem}
\newtheorem{lemma}[thm]{Lemma}
\newtheorem{cor}[thm]{Corollary}
\newtheorem{prop}[thm]{Proposition}
\newtheorem{corollary}[thm]{Corollary}
\theoremstyle{definition}
\newtheorem{defi}[thm]{Definition}
\theoremstyle{definition}
\theoremstyle{remark} 
\newtheorem{remark}[thm]{Remark}
\newcommand{\be}{\begin{equation}}
\newcommand{\ee}{\end{equation}}
\numberwithin{equation}{section}
\title[Fuller singularities for generic control-affine 
systems]{
Fuller singularities for generic control-affine 
systems with an even number of controls
}
\date{\today}
\subjclass[2010]{37C20, 49J15, 93B27}
\keywords{
Geometric optimal control; Control-affine systems; Chattering; Fuller; Genericity}
\thanks{This project has been supported by the ANR SRGI (reference ANR-15-CE40-0018) and by a public grant as part of the \emph{Investissement d'avenir project}, reference ANR-11-LABX-0056-LMH, LabEx LMH, in a joint call with \emph{Programme Gaspard Monge en Optimisation et Recherche Op\'erationnelle}. F.~B. is also supported by University of Padova STARS Project ``Sub-Riemannian Geometry and Geometric Measure Theory Issues: Old and
	New", and by GNAMPA of INdAM (Italy) through projects “Rectifiability in Carnot Groups”}
\author{Francesco Boarotto}
\address{Dipartimento di Matematica Tullio Levi-Civita,
 Universit\`a degli studi di Padova, Italy}
\email{\href{mailto:francesco.boarotto@math.unipd.it}{\nolinkurl{francesco.boarotto@math.unipd.it}}}
\author{Yacine Chitour}
\address{Universit\'e Paris-Sud, L2S, CentraleSup\'elec, Universit\'e Paris-Saclay, Gif-sur-Yvette, France}
\email{\href{mailto:yacine.chitour@l2s.centralesupelec.fr}{\nolinkurl{yacine.chitour@l2s.centralesupelec.fr}}}
\author{Mario Sigalotti}
\address{Inria % Paris 
\& Laboratoire Jacques-Louis Lions, CNRS, Sorbonne Universit\'e, Universit\'e de Paris,  %Inria, 
%Paris, 
France}
\email{\href{mailto:Mario.Sigalotti@inria.fr}{\nolinkurl{Mario.Sigalotti@inria.fr}}}
\begin{document}
	
	\begin{abstract} In this article we study how bad can be the singularities of a time-optimal trajectory of a generic control affine system. 
	Under the assumption that  the control has an even number of scalar components and belongs to a closed ball
	we prove that singularities cannot be, generically, worse than finite order accumulations of Fuller points, with order of accumulation lower than a bound depending only on the dimension of the manifold where the system is set. 
	\end{abstract}
	
	\maketitle

\section{Introduction}\label{sec:intro}

\subsection{Time-optimal trajectories of control-affine systems}

Let $M$ be a smooth and connected $n$-dimensional manifold. Given $k+1$ 
 smooth vector fields $f_0,\dots, f_{k}$ on $M$, we study control  systems 
 of the form 
\be\label{eq:contrsys-intro}
\dot{q}=f_0(q)+\sum_{i=1}^{k} u_i f_i(q),\qquad q\in M,\quad u\in \ov{B}^{k}_1,
\ee
where $B^{k}_1=\{ u\in\R^{k}\mid \|u\|<1 \}$ is the (open) unit ball contained in $\R^{k}$, and $\ov{B}^{k}_1$ denotes its closure. Systems of the form \eqref{eq:contrsys-intro} are called \emph{control-affine systems}, 
and the geometric aspects of their evolution 
has attracted a lot of interest in the mathematical control community (see e.g. \cite{Agrabook,bullo,jurdj}).

An \emph{admissible trajectory} of \eqref{eq:contrsys-intro} is a Lipschitz continuous curve $q:[0,T]\to M$, $T>0$, for which there exists $u\in L^\infty([0,T], \ov{B}^{k}_1)$ 
such that
\[
	\dot{q}(t)=f_0(q(t))+\sum_{i=1}^{k} u_i(t) f_i(q(t))
\]
holds almost everywhere on $[0,T]$. 

\begin{defi}\label{defi:timeoptcontrproblem}
	The time-optimal control problem associated with \eqref{eq:contrsys-intro} consists into finding the admissible trajectories $q:[0,T]\to M$ of the system that minimize the time needed to join $q(0)$ and $q(T)$, among all the admissible curves. Admissible trajectories that solve the time-optimal control problem associated with \eqref{eq:contrsys-intro} are called \emph{time-optimal trajectories}.
\end{defi}

Candidate time-optimal trajectories are characterized by the Pontryagin maximum principle \cite{PMP} (PMP, in short). Every admissible time-optimal trajectory can be lifted to a Lipschitz continuous trajectory $\lambda:[0,T]\to T^*M$ of 
an associated time-dependent Hamiltonian system (see Section~\ref{sec:PMP} for details). 
Moreover, $\lambda(t)\ne 0$ for every $t\in [0,T]$, and for almost every $t\in [0,T]$ the triple $(q(t),\lambda(t),u(t))$ has the property that 
\begin{equation}\label{eq:maxpmp}
\langle \lambda(t),\sum_{i=1}^{k} u_i(t) f_i(q(t))\rangle=\max_{v\in \ov{B}_1^{k}}\langle \lambda(t),\sum_{i=1}^{k} v_i f_i(q(t)) \rangle.
\end{equation} 
The triple $(q(\cdot),\lambda(\cdot),u(\cdot))$ is said to be an \emph{extremal triple}, and the PMP reduces the study of time-optimal trajectories to the study of extremal triples. We call \emph{extremal trajectory} any admissible trajectory which is part of an extremal triple, so that any time-optimal trajectory is an extremal trajectory, but the converse does not hold in general.
	
	\subsection{Regularity of extremal trajectories}
	Our goal is to establish regularity results for time-optimal trajectories of control-affine systems. Our methods, however, apply to the broader class of extremal ones. 
	
	Given an extremal triple $(q(\cdot),\lambda(\cdot), u(\cdot))$, the control $u$ can be smoothly reconstructed 
	from the maximality condition \eqref{eq:maxpmp}
 whenever $\lambda(t)$ is not simultaneously orthogonal to $f_1(q(t)),\dots, f_{k}(q(t))$.
However, smoothness may stop at times where $\lambda(t)$ annihilates 
$f_1(q(t)),\dots, f_{k}(q(t))$ and, actually,  for any given measurable control $t\mapsto u(t)$, there exist a dynamical system of the form \eqref{eq:contrsys-intro} and an initial datum $q_0\in M$ for which the admissible trajectory driven by $u$ and starting at $q_0$ is time-optimal.
This has been noticed 
in \cite{Sussmann1986} for the single-input case, i.e., when $k=1$, but can be easily extended to the general case.
It makes anyhow sense to investigate regularity 
properties of 
 extremal trajectories 
 for generic systems or, more generally, 
 for systems satisfying low-codimension non-degeneracy conditions. The single-input case, in particular, gave rise to a vast literature (see, e.g., \cite{AgrSig,BonKup,boscain-piccoli,Sch1,Sig1,sussmann1982time,ZelBorCont} and the references therein). 
	
	Recently, the same questions about the regularity of time-optimal trajectories have been posed in the multi-dimensional input case, but only few results are available \cite{agrachev2016switching,CaiDao,CJT03,CJT08,OrieuxRoussaire2019,zelikin2012geometry}. 
	
\begin{defi}\label{defi:O}
	Given an admissible trajectory $q:[0,T]\to M$, we denote by $O_q$  the maximal open subset of $[0,T]$ such that there exists a control $u\in L^\infty([0,T],\ov{B}_1^{k})$, associated with $q(\cdot)$, which is smooth on $O_q$. We also define $\Sigma_q$ (or $\Sigma$, if no ambiguity is possible) as 
	\[
	\Sigma_q=[0,T]\setminus O_q.
	\]
\end{defi}

An isolated point of $\Sigma$ is usually called a \emph{switching time}. 
The accumulation of switching times 
is referred to in the literature as \emph{Fuller phenomenon} (after the pathbreaking work \cite{fuller}), or also \emph{chattering} or \emph{Zeno behavior}.

\begin{defi}[Fuller times]\label{defi:Fuller}
	Let us define $\Sigma_0$ to be the set of isolated points of $\Sigma$. Inductively, we set $\Sigma_j$ to be the set of isolated points of $\Sigma\setminus(\bigcup_{i=0}^{j-1}\Sigma_i)$. A time $t\in \Sigma_j$ is said to be a \emph{Fuller time of order $j$}. Finally, we declare points of 
	\[
	\Sigma_{\infty}=\Sigma\setminus(\bigcup_{j\geq 0}\Sigma_j)
	\]
	to be Fuller times of infinite order.
\end{defi}

\begin{remark}\label{lemma:sigmacount}
For every $j\in \N$, the set $\Sigma_j$ consists of isolated points only, hence it is 
countable. 
\end{remark}

We measure the worst stable behavior of ``generic'' systems of the form \eqref{eq:contrsys-intro} in terms of the maximal order of their Fuller times. The more an instant $t$ is nested among Fuller times of high order, the greater is the number of relations satisfied by the vectors $f_0(q(t)),\dots, f_{k}(q(t))$. 
Transversality theory is then used to guarantee that generically not too many of these conditions can hold at the same point. 
As opposed to the analysis in \cite{boarotto2018}, we restrict ourselves to the case of global frames of everywhere linearly independent vector fields, and the word generic must be intended with respect to this property.

\begin{defi}\label{defi:indepvector}
For every open set $U\subset M$, we denote by
	\begin{itemize}
		\item $\Vect(U)$ the set of smooth vector fields $f$ on $U$, endowed with the $C^\infty$-Whitney topology.
		\item $\Vect(U)^{k+1}$ the set of all $(k+1)$-tuples $\mathbf{f}=(f_0,\dots, f_{k})$ in $\Vect(U)$ with the corresponding product topology.
		\item $\Vect(U)^{k+1}_0$ the set of everywhere linearly independent $(k+1)$-tuples of vector fields on $U$, that is, 
 	\be
		\Vect(U)^{k+1}_0=\left\{ \mathbf{f}
		\in\Vect(U)^{k+1}\big|\, f_0(q)\wedge\dots\wedge f_{k}(q)\ne 0\ \ \text{for every }q\in U \right\}.
	\ee
	We equip $\Vect(U)^{k+1}_0$ with the topology inherited  from $\Vect(U)^{k+1}$.
	\end{itemize}  
\end{defi}

The next statement contains the precise formulation of our main result, which is obtained under the condition $k=2m$, that is, assuming that the number of controlled vector fields is even.

\begin{thm}\label{thm:main}
	\label{t:main}
	Let $m,n\in \N$ be such that $2m+1\le n$. 
	Let $M$ be a $n$-dimensional smooth manifold. There exist a positive integer $K$ {\color{blue} depending only on $n$ % and $m$
	}
	and an open and dense set $\mc{U}\subset\Vect(M)^{2m+1}_0$ such that, 
	if the $(2m+1)$-tuple $\mathbf{f}=(f_0,\dots,f_{2m})$ is in $\mc{U}$, 
	then 
	every extremal trajectory $q(\cdot)$
of the time-optimal control problem
	\be
	\dot q=f_0(q)+\sum_{i=1}^{2m}u_if_i(q),\quad q\in M,\ \  u\in\ov{B}_1^{2m},
	\ee
has at most Fuller times of order $K$, i.e.,
	\[
		\Sigma=\Sigma_0\cup \dots \cup \Sigma_{K},
	\]
	where $\Sigma$ and $\Sigma_j$ are as in Definitions~\ref{defi:O} and \ref{defi:Fuller}.	 
\end{thm}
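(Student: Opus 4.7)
The plan is to translate the statement into a stratification–transversality problem on the cotangent bundle. By the PMP, every extremal lifts to a Lipschitz curve $\lambda(\cdot)$ in $T^*M$, and on the open set where the vector-valued switching function
\[
h(\lambda)=\bigl(\langle\lambda,f_1(\pi\lambda)\rangle,\dots,\langle\lambda,f_{2m}(\pi\lambda)\rangle\bigr)
\]
does not vanish, the maximality condition \eqref{eq:maxpmp} yields $u_i=h_i/\|h\|$ smoothly in $\lambda$. Hence $\Sigma_q$ is the preimage under the lift of the singular locus $\mc{S}=\{h=0\}\subset T^*M$, which is a subbundle of codimension $2m$.

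I would then build a decreasing chain $\mc{S}=\mc{S}^0\supset\mc{S}^1\supset\cdots$ of analytic-type subsets such that for an extremal to admit a Fuller time of order $j$ its lift must meet $\mc{S}^j$. The additional equations cutting out $\mc{S}^{j+1}$ inside $\mc{S}^j$ come from repeatedly differentiating $h$ along the extremal Hamiltonian flow: this produces relations $\langle\lambda,\phi(f_0,\dots,f_{2m})\rangle=0$ for iterated brackets $\phi$, and in the even-input setting it forces the vanishing of the $\Pf$ of the $2m\times 2m$ skew-symmetric Goh matrix $\bigl(\langle\lambda,[f_i,f_j]\rangle\bigr)_{i,j=1}^{2m}$. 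The evenness hypothesis $k=2m$ is essential here: $\Pf$ is a single polynomial of degree $m$ whose generic non-vanishing stratifies $\mc{S}$ cleanly, while for an odd number of controls the analogous matrix is automatically singular and a much more delicate analysis is needed, as in \cite{boarotto2018}.

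With the stratification in place, apply Thom's jet transversality theorem to the evaluation map of $(2m+1)$-tuples on an appropriate jet bundle. The goal is to produce an open and dense set $\mc{U}\subset\Vect(M)^{2m+1}_0$ for which every $\mc{S}^j$ is a stratified subset of $T^*M$ whose codimension grows monotonically with $j$. Once $\mathrm{codim}\,\mc{S}^j$ exceeds $\dim(PT^*M)=2n-1$ (so that the set is generically empty), or more precisely exceeds the dimension of a family of candidate extremal arcs that could reach it, no extremal can display a Fuller time of order $j$, giving the uniform bound $K=K(n)$.

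The main obstacle is the inductive step: verifying that, for generic $\mathbf{f}$, the new equations cutting out $\mc{S}^{j+1}$ inside $\mc{S}^j$ are transverse to the previous ones, so that the codimension counting is genuine. Concretely, one must compute the ideal generated by iterated Lie derivatives of $h$ along the extremal Hamiltonian vector field, disentangle Pfaffian relations from purely linear ones (exploiting the identity $\det=\Pf^{\,2}$ for skew matrices of even size), and then show that the resulting multi-jet map is a submersion off a thin set so that Thom's theorem applies. Once this book-keeping is complete, the value of $K$ will be read off by comparing the accumulated codimension against $2n-1$.
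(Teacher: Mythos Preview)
Your outline captures the overall architecture --- lift to $T^*M$, accumulate polynomial constraints by differentiation along the extremal flow, then apply jet transversality to force the constraint locus to be empty --- and this is indeed the paper's strategy. But there is a genuine gap at the very first step of the induction, and your description of the role of the Pfaffian is inverted.

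When the Goh matrix $H_{II}(\lambda)=\bigl(\langle\lambda,[f_i,f_j]\rangle\bigr)_{i,j=1}^{2m}$ is invertible, differentiating $h_I$ once along the extremal and invoking Lemma~\ref{lemma:diff} yields only $h_{0I}(t^*)=H_{II}(t^*)u^*$ for some $u^*\in\ov{B}_1^{2m}$. Since $H_{II}(t^*)$ is invertible this merely determines $u^*=H_{II}(t^*)^{-1}h_{0I}(t^*)$ and imposes \emph{no} algebraic relation on $\lambda(t^*)$: it is an open condition, not a codimension-one constraint. Your sentence ``in the even-input setting it forces the vanishing of the $\Pf$ of the $2m\times 2m$ skew-symmetric Goh matrix'' is therefore wrong --- the Pfaffian is generically nonzero in the even case, and nothing in the differentiation forces it to vanish. (You seem to realise this two lines later when you write ``generic non-vanishing'', but then the chain of strata $\mc{S}^j$ never gets started on the open stratum $\{\Pf\ne 0\}$.)

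The paper closes this gap with Proposition~\ref{prop:singeven}: at an accumulation point with invertible Goh matrix one actually has the \emph{equality} $\|H_{II}(t^*)^{-1}h_{0I}(t^*)\|=1$. The argument is not purely algebraic: if $h_{0I}$ lay in the interior $H_{II}B_1^{2m}$, then by \cite[Theorem~3.4]{agrachev2016switching} the switching function $h_I$ would vanish on a full neighbourhood, the singular control $u=H_{II}^{-1}h_{0I}$ would be smooth there, and $t^*$ could not lie in $\Sigma$. This boundary condition is the first genuine equation $\phi_0=0$ (Corollary~\ref{cor:firstcond}), and it is precisely here that the even hypothesis enters --- not through $\Pf=0$, but through $\Pf\ne 0$ enabling the invertibility needed for the cited switching theorem. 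Once $\phi_0$ is in hand, the recursive determinantal constraints $\phi_\ell$ of Definition~\ref{defi:bracketmax} proceed as you envisage. On the complementary strata where the Goh matrix drops rank, a separate and more elaborate family $(\mu_r)$ is built (Definition~\ref{defi:brackets}), and the pieces are reassembled via the Fuller-order combinatorics of Corollary~\ref{cor:imbriques} rather than by a single nested chain $\mc{S}^0\supset\mc{S}^1\supset\cdots$.
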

	
	Combining Theorem~\ref{thm:main} and Remark~\ref{lemma:sigmacount}, we deduce that 
	any extremal trajectory $q(\cdot)$
	of a generic control-affine system of the form \eqref{eq:contrsys-intro} with $k=2m$
	is smooth out of a countable set.
	
\subsection{Remarks on the main result and open problems} We conclude this introduction proposing two lines of investigation related to our study. The first one consists into extending our analysis to the case of linearly dependent frames, as the first and the third author have done in \cite[\S 4.1]{boarotto2018} for the single-input case. Even though we expect that similar arguments work also in the multi-input case, the differential structure of the singular locus where the fields $f_0,\dots,f_{2m}$ become dependent is 
more complicated, and needs to be properly investigated.

A different, and possibly more substantial line of research consists into establishing Theorem~\ref{thm:main} for systems of the form \eqref{eq:contrsys-intro} and an \emph{odd number} (greater than one) of controls. The fact that an extremal triple $(q(\cdot),\lambda(\cdot), u(\cdot))$ crosses the singular locus $\{\lambda\in T^*M\mid \langle \lambda, f_i(q) \rangle=0,\, i=1,\dots,2m,\, q=\pi(\lambda)\}$ imposes in the even case a differential condition that we can exploit to begin our iterative arguments (Proposition~\ref{prop:singeven}). 
This condition is based
on the results in \cite{agrachev2016switching}
where the switching behavior in time-optimal  trajectories for multi-input control-affine systems is characterized (see also \cite{CaiDao} for a study in the same spirit for a class of control-affine systems 
issuing from the circular restricted three-body problem). 
In the odd case, 
it is not clear how to derive such a first additional relation at times at which 
an extremal triple $(q(\cdot),\lambda(\cdot), u(\cdot))$ 
crosses the singular locus. 
In the single-input case, this difficulty has been overcome with a suitable analysis of extremal trajectories around Fuller times \cite[Theorem 18]{boarotto2018}, but the arguments there depend decisively on the fact that the control is scalar.
For the general odd case, the problem is 
open, and new ideas are required.

\subsection{Structure of the paper} 
In Section~\ref{sec:preli} we 
present the Pontryagin maximum principle (PMP) to recast the time-optimal problem into its proper geometric framework. Based on the Hamiltonian formalism of the PMP, we establish a differentiation lemma that we will use intensively in the paper (Lemma~\ref{lemma:diff}).
Section~\ref{sec:preli}  also contains 
some general observation on 
the maximal order of the Fuller  times in a set
(Section~\ref{s:Full-o-s}) and 
 classical definitions
about 
jet spaces and transversality theory (Section~\ref{sec:transversality}). 
 Section~\ref{sec:algPreli} collects  additional algebraic material on skew-symmetric matrices that we need in subsequent arguments. 
%In Section~\ref{sec:dynam} we study the dynamics of the maximized Hamiltonian of the PMP, and we recall the key results from \cite{agrachev2016switching} which we use to characterize the behavior of extremal trajectories when they cross singularities. 
Sections~\ref{sec:acc} and \ref{s:acc-notinv} are devoted to the recursive  characterization of dependence conditions holding at 
accumulations of Fuller times, when the Goh matrix is, respectively, invertible and singular.
Finally, in Section~\ref{sec:tran}, we
conclude %with 
the proof of the main result, Theorem~\ref{thm:main}.

\section{Main technical tools}\label{sec:preli}

\subsection{The Pontryagin maximum principle
}\label{sec:PMP}
	
	Let us introduce some technical notations that we will employ extensively throughout the rest of the paper. Let $\pi:T^*M\to M$ be the cotangent bundle, and $s\in \Lambda^1(T^*M)$ be the tautological Liouville one-form on $T^*M$. The non-degenerate skew-symmetric form $\sigma=ds\in \Lambda^2(T^*M)$
endows $T^*M$ with a canonical symplectic structure.
	
	With any $C^1$ function $p:T^*M\to \R$ let us associate its Hamiltonian lift $\vec{p}\in
	C(T^*M,TT^*M)$ by the condition
	\be\label{eq:Hamlift}
		\sigma_\lambda(\cdot,\vec{p})=d_\lambda p.
	\ee
	
	Fix $\mathbf{f}=(f_0,\dots, f_{2m})\in \Vect(M)^{2m+1}$. 
	The Pontryagin Maximum Principle (PMP, for short) \cite{PMP} gives then a 
	necessary condition satisfied by candidate time-optimal trajectories of
\be\label{eq:contrsys}
\dot{q}=f_0(q)+\sum_{i=1}^{2m} u_i f_i(q),\qquad q\in M,\quad u\in \ov{B}^{2m}_1,
\ee
 recalled in the theorem below. Introducing the control-dependent Hamiltonian function $\mc{H}: T^*M \times \R^{2m}\to\R$ by
	\be\label{eq:maxH}
		\mc{H}(\lambda,v)=\langle \lambda, f_0(q)+\sum_{i=1}^{2m}v_if_i(q) \rangle, \ \ q=\pi(\lambda),
	\ee
	the precise statement is the following.
	
	\begin{thm}[PMP]\label{thm:PMP}
		Let $q:[0,T]\to M$ be a time-optimal trajectory of \eqref{eq:contrsys}, associated with a 
		control $u(\cdot)$.
		Then there exists an absolutely continuous curve
		$\lambda:[0,T]\to T^*M$ such that
		$(q(\cdot),\lambda(\cdot),u(\cdot))$ is an extremal triple, i.e., in terms of the control-dependent Hamiltonian $\mathcal{H}$ introduced in \eqref{eq:maxH}, one has 
		\begin{align}
			&\lambda(t)\in T^*_{q(t)}M\setminus \{0\},\quad \forall t\in [0,T],\\
			\label{eq:maxPMP}
			&		\mathcal{H}(\lambda(t),u(t))=\max\{\mathcal{H}(\lambda(t),v)\mid v\in\ov{B}_1^{2m}\}\ \ \mathrm{for\ a.e.\ }t\in[0,T],\\
			&\dot{\lambda}(t)=\vec{\mathcal{H}}
			(\lambda(t),u(t)),\quad \mathrm{for\ a.e.\ }t\in[0,T].\label{eq:dynPMP}
		\end{align}
	\end{thm}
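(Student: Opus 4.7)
The plan is to invoke the classical needle-variation proof of the Pontryagin Maximum Principle, following the original argument in \cite{PMP} and its modern expositions. First, I would embed the problem in the extended state space $M\times\R$ by adjoining the coordinate $q^0$ with $\dot q^0=1$: time-optimality of $q(\cdot)$ then means that the terminal point $(q(T),T)$ lies on the boundary of the reachable set from $(q(0),0)$, in the direction of decreasing $q^0$. The desired costate will be produced as a separating covector at this boundary point, then pulled back to times $t<T$ via the adjoint of the optimal variational flow.

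At each Lebesgue density point $\tau\in(0,T)$ of the measurable control $u(\cdot)$ and each $v\in\ov{B}_1^{2m}$, I would construct a \emph{needle variation}: on a small interval $[\tau-s,\tau]$ replace $u$ by the constant $v$ and leave the rest of the control unchanged. Denoting by $\Phi_{T,\tau}$ the differential at $q(\tau)$ of the flow of the optimal trajectory from time $\tau$ to time $T$, the first-order displacement of the endpoint is
\[
s\,\Phi_{T,\tau}\!\left(\sum_{i=1}^{2m}(v_i-u_i(\tau))\,f_i(q(\tau))\right)+o(s).
\]
Positive linear combinations of such vectors, enlarged by the free terminal-time variation, span a convex cone $K\subset T_{q(T)}M\oplus\R$ that approximates the reachable set to first order. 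Time-optimality forces $K$ to miss the open half-line pointing towards decreasing $q^0$, for otherwise an open-mapping / Brouwer fixed-point argument applied to a packet of simultaneous needle variations would produce an admissible trajectory reaching $q(T)$ in time strictly less than $T$.

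By Hahn--Banach separation, there exists a nonzero $(\lambda(T),\lambda^0)\in T^*_{q(T)}M\oplus\R$ that is nonpositive on $K$ and nonnegative on the forbidden direction. I would then define $\lambda(t):=(\Phi_{T,t})^{\ast}\lambda(T)$; this curve is absolutely continuous and, since the pull-back by the adjoint of a nonautonomous variational flow is precisely the Hamiltonian flow of $\mc{H}(\cdot,u(t))$, it satisfies \eqref{eq:dynPMP}. Testing the separating inequality against a single needle variation at an arbitrary Lebesgue point $\tau$ yields $\langle\lambda(\tau),\sum_{i=1}^{2m}(v_i-u_i(\tau))f_i(q(\tau))\rangle\le 0$ for every $v\in\ov{B}_1^{2m}$, which is exactly the maximality condition \eqref{eq:maxPMP}. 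Nontriviality $\lambda(t)\neq 0$ follows from the linearity of the adjoint equation, which implies $\lambda$ either vanishes identically or nowhere, combined with a standard argument excluding the degenerate case in the fixed-endpoint time-optimal setting. The main technical obstacle, and the conceptual heart of the proof, is the convexity/openness step: one has to show that interior points of the first-order cone $K$ correspond to genuinely reachable perturbations of the endpoint, a claim that is what truly distinguishes the PMP from a naive infinite-dimensional Lagrange-multiplier argument and which typically requires a careful multi-needle packet construction together with a fixed-point argument.
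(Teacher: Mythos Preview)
The paper does not actually prove Theorem~\ref{thm:PMP}: it is stated as a classical result and attributed to \cite{PMP}, with no argument given. Your outline is the standard needle-variation proof of the PMP and is correct as a sketch, so there is nothing to compare against; if anything, you have supplied more than the paper does. One small point worth tightening is the nontriviality claim $\lambda(t)\neq 0$: the separation argument only yields $(\lambda(T),\lambda^0)\neq 0$, and to exclude $\lambda\equiv 0$ you should invoke the free-terminal-time transversality (the maximized Hamiltonian is constant and equal to $-\lambda^0$), so that $\lambda\equiv 0$ would force $\lambda^0=0$ as well.
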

	
	\begin{defi}
		For any extremal triple $(q(\cdot),\lambda(\cdot),u(\cdot))$, we call the corresponding trajectory $t\mapsto q(t)$ a time-extremal trajectory, and the curve $t\mapsto \lambda(t)$ its associated time-extremal lift.
	\end{defi}
	
	For every $i=0,\dots,2m$, let us define the smooth functions $h_i:T^*M\to\R$ by
	\[
		h_i(\lambda):=\langle \lambda, f_i(q)\rangle,\ \ q=\pi(\lambda).
	\]
	
More generally, let $k$ be an integer and $D=i_1\cdots i_k$ a multi-index  of 
$\{0,1,\dots,2m\}$, and let $|D|:=k$ be the length of $D$.
A multi-index $D=i\cdots ij$ with $k$ consecutive occurrences of the index $i$ is denoted as $D=i^kj$. 
We use $f_D$ to denote the vector field defined by
\be f_D=\left[f_{i_1},\left[\cdots,\left[f_{i_{k-1}},f_{i_k}\right]\cdots\right]\right],\ee
and $h_D$ to denote the smooth function on 
$T^*M$ given by $\langle\lambda,f_D\rangle$ for $\lambda\in T^*
M$.

By a slight abuse of notations, given a time-extremal triple $(q(\cdot),\lambda(\cdot),u(\cdot))$ defined on $[0,T]$, we define 
	$h_i(t):=h_i(\lambda(t))$ for every $i=1,\dots,{2m}$ and $t\in [0,T]$. Throughout the rest of the paper, we further extend this convention in the following way: whenever $\varphi:T^*M\to \R$ is a scalar function defined on 
	$
	T^*M$ and $t\mapsto \lambda(t)$ is an integral curve of $\vec{\mc{H}}$, we denote by $\varphi(t)$ the evaluation of $\varphi$ at $\lambda(t)$ if no ambiguity is possible.

	Denote by $I$ the set $\{1,\dots,2m\}$ and by $h_I$ 
	the map $h_I: T^*M\to\R^{2m}$ defined by
	\be\label{eq:h}
		h_I(\lambda)=(h_1(\lambda),\dots,h_{2m}(\lambda)).
	\ee

	Let us first recall that the time-extremal control $u$ is smooth (up to modification on a set of measure zero) on the open set $R_q:=\{t\in [0,T]\mid h_I(t)\neq 0\}$, i.e., in terms of the set $\Sigma_q$ introduced in Definition~\ref{defi:O}, 
	\begin{equation}\label{eq:Sigmainh=0}
	\Sigma_q\subset \{t\in [0,T]\mid h_I(t)=0\}. 
	\end{equation}  
	Indeed, the maximality condition \eqref{eq:maxPMP} provided by the PMP yields  
	the explicit characterization 
	\be\label{eq:upmp}
		u(t)=\frac{h_I(t)}{\|h_I(t)\|},\quad t\in R_q.
	\ee
	Therefore an extremal trajectory on $R_q$ is an integral curve of the vector field 
	\[
		\lambda\mapsto \vec{\mc{H}}\left(\lambda,\frac{h_I(\lambda)}{\|h_I(\lambda)\|}\right),
	\]
	which is well-defined and smooth on $T^*M\setminus \{ \lambda\in T^*M\mid h_I(\lambda)=0 \}$. 
	In particular, its integral curves are smooth as well. 
	
	We also recall the following differentiation formula along a time-extremal lift $t\mapsto \lambda(t)$, which follows as a consequence of the symplectic structure on $T^*M$ (see \cite[Section 3.3]{AbrahamMarsden}).
	
	\begin{prop}\label{prop:diffextremal}
		Let $\varphi:T^*M\to\R$ be a $C^1$ function, and let $\lambda:[0,T]\to T^*M$ be 
		a solution of
		\eqref{eq:dynPMP}
		corresponding to a control $u:[0,T]\to \ov{B}^{2m}_1$. Then
		\be\label{eq:fordiff}
			\frac{d}{dt}\varphi(\lambda(t))=\{h_0,\varphi \}(\lambda(t))+\sum_{i=1}^{2m} u_i(t)\{ h_i,\varphi \}(\lambda(t))\ \ \text{a.e. on } [0,T].
		\ee
	\end{prop}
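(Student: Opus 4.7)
The plan is to differentiate the composition $t\mapsto \varphi(\lambda(t))$ pointwise at times at which $\lambda$ is differentiable, and then rewrite the resulting pairing in terms of Poisson brackets using the canonical symplectic structure on $T^*M$. Since $\lambda$ is absolutely continuous, $\dot\lambda(t)$ exists for almost every $t\in [0,T]$; at any such $t$, the ordinary chain rule gives
\[
\frac{d}{dt}\varphi(\lambda(t))=d_{\lambda(t)}\varphi\bigl(\dot\lambda(t)\bigr),
\]
and by \eqref{eq:dynPMP} this equals $d_{\lambda(t)}\varphi\bigl(\vec{\mc H}(\lambda(t),u(t))\bigr)$.

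The next step is to decompose the Hamiltonian vector field. The map $p\mapsto \vec p$ defined by \eqref{eq:Hamlift} is linear in $p$, because non-degeneracy of $\sigma$ makes $\vec p$ the unique vector satisfying a condition which depends linearly on $d p$. Combined with the explicit expression $\mc H(\lambda,v)=h_0(\lambda)+\sum_{i=1}^{2m}v_ih_i(\lambda)$ coming from \eqref{eq:maxH}, and using that $v$ is just a parameter when taking the Hamiltonian lift with respect to the fiber variable $\lambda$, this yields
\[
\vec{\mc H}(\lambda,u(t))=\vec h_0(\lambda)+\sum_{i=1}^{2m}u_i(t)\,\vec h_i(\lambda).
\]

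Finally, I would identify each pairing $d_{\lambda}\varphi(\vec h_i(\lambda))$ with the Poisson bracket $\{h_i,\varphi\}(\lambda)$. Applying \eqref{eq:Hamlift} with $p=h_i$ and evaluating on $\vec\varphi$ would give $\sigma_\lambda(\vec\varphi,\vec h_i)=d_\lambda h_i(\vec\varphi)$; swapping the roles and using antisymmetry of $\sigma$ yields $d_\lambda\varphi(\vec h_i)=\sigma_\lambda(\vec h_i,\vec\varphi)=\{h_i,\varphi\}(\lambda)$, which is exactly the bracket term appearing in \eqref{eq:fordiff}. Plugging this into the decomposition of the previous paragraph produces the stated formula.

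This is essentially a textbook computation (cf. the cited \cite{AbrahamMarsden}), so I do not expect any genuine obstacle. The only point requiring a little care is the interplay between the almost everywhere nature of \eqref{eq:dynPMP} and of the measurable control $u(\cdot)$: the chain rule step is justified only on the full-measure set where $\lambda$ is differentiable and \eqref{eq:dynPMP} holds, which is consistent with the a.e.\ assertion in \eqref{eq:fordiff}. No $C^2$ regularity of $\varphi$ is needed, since $\vec h_i$ is a $C^\infty$ vector field and $d\varphi$ can be paired with it under the sole assumption $\varphi\in C^1$.
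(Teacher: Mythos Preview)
Your argument is correct and is precisely the standard computation the paper alludes to: the paper gives no proof of this proposition, merely citing \cite[Section~3.3]{AbrahamMarsden}, and your chain-rule plus Hamiltonian-lift-linearity plus Poisson-bracket identification is exactly that textbook derivation.
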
 
	In particular, Proposition~\ref{prop:diffextremal} implies that 
for every	$X\in\Vc(M)$ and every extremal triple associated with \eqref{eq:contrsys}
	the identity
	\be\label{eq:diffvectfield}
		\frac{d}{dt}\langle \lambda(t),X(q(t)) \rangle=\langle \lambda(t),[ f_0+\sum_{i=1}^{2m} u_i(t)f_i,X ](q(t))\rangle
	\ee
	holds true for a.e. $t$ (here we apply the proposition to $\varphi(\lambda)=\left\langle \lambda,X(\pi(\lambda))\right\rangle$
	). 

Denote by  $M_{j,k}(\R)$ the set of $j\times k$ matrices with real entries and let $M_j(\R)=M_{j,j}(\R)$. 
	We introduce the map
	\begin{align}\label{eq:Goh}
		H_{II}:T^*M&\to M_{2m}(\R),\\
			\lambda&\mapsto 
			(\{ h_i,h_j \}(\lambda))_{i,j=1}^{2m}.
	\end{align}
	For every $\lambda\in T^*M$, the skew symmetric matrix $H_{II}(\lambda)$ is called the \emph{Goh matrix}.
	Defining $h_{0I}:T^*M\to M_{2m,1}(\R)$ to be the vector-valued function $(h_{0i}(\lambda))_{i=1}^{2m}$ and differentiating $h_I$ along a time-extremal triple, we find by the previous considerations 
	that 
	\be\label{eq:firstdiff}
		\dot{h}_I(t)=h_{0I}(t)-H_{II}(t)u(t)
	\ee
	for a.e. $t$ 
	(notice that the minus sign is a consequence of considering the transposition in \eqref{eq:fordiff}). In particular, within the set $R$, the dynamics of $h_I$ are described by 
	\[
		\dot{h}_I(t)=h_{0I}(t)-H_{II}(t)\frac{h_I(t)}{\|h_I(t)\|}.
	\]

\subsection{A differentiation lemma}

We present in this section a 
result that we will extensively use in the paper. It concerns the 
differentiation along an extremal curve of a smooth  
function on $T^*M$
that vanishes at a converging sequence of times.

\begin{lemma}\label{lemma:diff}
Let  $(q(\cdot),\lambda(\cdot),u(\cdot))$ be an extremal triple on $[0,T]$ associated with \eqref{eq:contrsys}. 
	Assume that there exists a 
	sequence of times $(t_l)_{l\in\N}$ in $[0,T]$ such that 
$t_l\to t^*\in [0,T]$ and $t_l\ne t^*$ for every $l\in \N$.
	Then  there exists $u^*\in \ov{B}^{2m}_1$ such that, for every 	smooth function $\varphi:T^*M\to \R$ satisfying $\varphi(\lambda(t_l))=0$ for every $l\in\N$,
	\be\label{eq:diff}
	\{h_0,\varphi\}(\lambda(t^*))+\sum_{i=1}^{2m}u_i^*\{ h_i,\varphi \}(\lambda(t^*))=0.
	\ee
\end{lemma}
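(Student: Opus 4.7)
The key subtlety in the statement is that $u^*$ must be produced \emph{before} choosing $\varphi$, and then the identity must hold for every admissible $\varphi$. So my plan is to first extract $u^*$ as a cluster point of suitable average controls depending only on the sequence $(t_l)$ and the extremal, and only afterwards verify \eqref{eq:diff} for any $\varphi$ in the prescribed class.

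The main analytic input is Proposition~\ref{prop:diffextremal}: for any smooth $\varphi:T^*M\to\R$, the composition $t\mapsto\varphi(\lambda(t))$ is absolutely continuous on $[0,T]$ and
\[
\frac{d}{dt}\varphi(\lambda(t))=\{h_0,\varphi\}(\lambda(t))+\sum_{i=1}^{2m}u_i(t)\{h_i,\varphi\}(\lambda(t))\quad\text{a.e.}
\]
Since $\lambda$ is continuous and $\varphi(\lambda(t_l))=0$, continuity forces $\varphi(\lambda(t^*))=0$, so the fundamental theorem of calculus yields
\[
0=\varphi(\lambda(t_l))-\varphi(\lambda(t^*))=\int_{t^*}^{t_l}\Bigl(\{h_0,\varphi\}(\lambda(t))+\sum_{i=1}^{2m}u_i(t)\{h_i,\varphi\}(\lambda(t))\Bigr)\,dt.
\]

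For the construction of $u^*$, I would set
\[
u^{(l)}:=\frac{1}{t_l-t^*}\int_{t^*}^{t_l}u(s)\,ds\in\ov{B}^{2m}_1,
\]
where membership in $\ov{B}^{2m}_1$ follows from the convexity of the ball. By compactness of $\ov{B}^{2m}_1$, extract a subsequence (still denoted $(t_l)$) such that $u^{(l)}\to u^*\in\ov{B}^{2m}_1$. Crucially, this extraction depends only on the extremal triple and the sequence $(t_l)$, not on $\varphi$.

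Then, for any smooth $\varphi$ with $\varphi(\lambda(t_l))=0$ for all $l$, I would divide the displayed integral by $t_l-t^*$ and let $l\to\infty$. The $\{h_0,\varphi\}$-term tends to $\{h_0,\varphi\}(\lambda(t^*))$ by continuity of $\{h_0,\varphi\}\circ\lambda$ at $t^*$. For each $i\in\{1,\dots,2m\}$, I would split
\[
\frac{1}{t_l-t^*}\int_{t^*}^{t_l}u_i(t)\{h_i,\varphi\}(\lambda(t))\,dt=u^{(l)}_i\{h_i,\varphi\}(\lambda(t^*))+R^{(l)}_i,
\]
with $|R^{(l)}_i|\le\sup_{t\in[\min(t^*,t_l),\max(t^*,t_l)]}|\{h_i,\varphi\}(\lambda(t))-\{h_i,\varphi\}(\lambda(t^*))|\to 0$, again by continuity. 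Passing to the limit gives \eqref{eq:diff}.

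The only real obstacle is the logical order: one must \emph{first} produce $u^*$ and then plug in an arbitrary $\varphi$. This is handled precisely by the fact that the averages $u^{(l)}$ are defined without reference to $\varphi$, so a single diagonal (here even trivial) compactness extraction suffices. A minor bookkeeping point is that the sequence $(t_l)$ may approach $t^*$ from both sides; the formula for $u^{(l)}$ above already handles this correctly, since the sign of $t_l-t^*$ cancels between numerator and denominator.
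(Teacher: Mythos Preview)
Your proof is correct and follows essentially the same approach as the paper: define the averaged controls $u^{(l)}=\frac{1}{t_l-t^*}\int_{t^*}^{t_l}u(s)\,ds$, extract a convergent subsequence by compactness of $\ov{B}^{2m}_1$, and then pass to the limit in the integral identity coming from Proposition~\ref{prop:diffextremal}. Your explicit splitting into $u^{(l)}_i\{h_i,\varphi\}(\lambda(t^*))+R^{(l)}_i$ and your emphasis on the logical order (construct $u^*$ first, then take arbitrary $\varphi$) make the argument a bit more transparent than the paper's version, but the substance is identical.
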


\begin{proof}
	Since $u(\cdot)\in  L^\infty([0,T],\ov{B}_1^{2m})$, there exists  a subsequence $(t_{l_w})_{w\in\N}$ such that the limit
	\[
		u^*:=\lim_{w\to\infty}\frac{1}{t^*-t_{l_w}}\int_{t_{l_w}}^{t^*}u(t)dt
	\]
	exists and belongs to $\ov{B}^1_{2m}$. 

Consider a smooth function $\varphi:T^*M\to \R$ such that $\varphi(\lambda(t_l))=0$ for every $l\in\N$.
	By continuity we have $\varphi(\lambda(t^*))=0$, so that by Proposition~\ref{prop:diffextremal} for every $l\in\N$ we can write
	\begin{align}\label{eq:howtodiff}
		0&=\frac{\varphi(\lambda(t^*))-\varphi(\lambda(t_l))}{t^*-t_l}=\frac{1}{t^*-t_l}\int_{t_l}^{t^*}\frac{d}{dt}\varphi(\lambda(t))dt\\&=\frac{1}{t^*-t_l}\int_{t_l}^{t^*}\big(\{ h_0,\varphi \}(\lambda(t))+\sum_{i=1}^{2m} u_i(t)\{ h_i,\varphi \}(\lambda(t))\big) dt.		
	\end{align}
	Rewriting \eqref{eq:howtodiff} along the subsequence $t_{l_w}$ and taking the limit as $w\to\infty$ permits then to conclude, since 
	 $t\mapsto \{ h_i,\varphi \}(\lambda(t))$ is absolutely continuous for every $i=0,\dots,2m$.
\end{proof}

\subsection{Fuller order of a set}\label{s:Full-o-s}

For a subset $\Xi$ of $\R$ we denote by $\Xi_0$ its subset made of isolated points and, 
 inductively, by $\Xi_j$ the set of isolated points of 
 $\Xi\setminus(\bigcup_{i=0}^{j-1}\Xi_i)$, $j\ge1$.

\begin{defi}
We say that $\Xi$ has \emph{Fuller order $k\in \N$} 	
if $\Xi=\Xi_0\cup\cdots\cup \Xi_k$ and $\Xi_k\ne \emptyset$. 
We say that 
$\emptyset$ has \emph{Fuller order $-1$} and that
$\Xi$ has \emph{Fuller order $\infty$} if $\Xi\setminus(\bigcup_{i=0}^{k}\Xi_i)
\ne \emptyset$ for every $k\in \N$.
\end{defi}

\begin{remark}
The notion of Fuller order is strictly related to the one of Cantor-Bendixson rank: if $X$ is a topological space (in particular, a subset of $\R$ with the induced topology)
the \emph{Cantor-Bendixson rank of $X$} is the least 
ordinal such that $X^{(\alpha)}=X^{(\alpha+1)}$, where $X^{(1)}=\{x\in X\mid x\in \overline{X\setminus\{x\}}\}$ is the \emph{derived subset of $X$}, $X^{(\alpha+1)}=(X^{(\alpha)})^{(1)}$, and $X^{(\beta)}=\cap_{\alpha<\beta}X^{(\alpha)}$.  
For \emph{scattered} sets, i.e., sets such that $X^{(k)}=\emptyset$ for some $k\in\N$,  the Cantor-Bendixson rank is equal to the Fuller order plus 1. 
For perfect sets, on the contrary, the Fuller order is infinite and the Cantor-Bendixson rank  is zero. 

The properties of the Fuller order described in the following two results have been probably already observed in the context of Cantor-Bendixson rank 
but we were not able to find a precise reference for them.
\end{remark}

\begin{lemma}\label{lem:Fuller-order}
Let 
$\Xi,\mathfrak{S}$ be two subsets of $\R$. 
If $\Xi$ has Fuller order at least $k$ and $\mathfrak{S}$ has Fuller order at most $j$, with $k>j\ge 0$, then 
$\Xi\setminus \mathfrak{S}$ has Fuller order at least $k-j-1$.
\end{lemma}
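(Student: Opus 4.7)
The plan is to prove the lemma by induction on $j$. The base case $j=0$ (i.e., $\mathfrak{S}^{(1)}=\emptyset$: every point of $\mathfrak{S}$ is isolated in $\mathfrak{S}$) contains all the technical content. The inductive step is then immediate: decompose $\mathfrak{S}=\mathfrak{S}_0\sqcup\mathfrak{S}^{(1)}$, note that $\mathfrak{S}^{(1)}$ has Fuller order $\le j-1$, apply the induction hypothesis to $\mathfrak{S}^{(1)}$ to obtain Fuller order of $\Xi\setminus\mathfrak{S}^{(1)}$ at least $k-j$, and then apply the base case with $\Xi':=\Xi\setminus\mathfrak{S}^{(1)}$ (Fuller order $\ge k-j\ge 1$) and $\mathfrak{S}':=\mathfrak{S}_0$ (Fuller order $\le 0$) to conclude that $\Xi\setminus\mathfrak{S}$ has Fuller order at least $k-j-1$.

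The central device for the base case is a \emph{correspondence lemma}: for every $p\in\mathfrak{S}$, choose an open neighborhood $V_p\subseteq\R$ with $V_p\cap\mathfrak{S}=\{p\}$ (which exists because $p$ is isolated in $\mathfrak{S}$). Then for every $x\in V_p\setminus\{p\}$ and every $r\ge 0$,
$$x\in(\Xi\setminus\mathfrak{S})^{(r)} \quad\Longleftrightarrow\quad x\in\Xi^{(r)}.$$
This is shown by an easy induction on $r$, since on the punctured neighborhood $V_p\setminus\{p\}$ the sets $\Xi$ and $\Xi\setminus\mathfrak{S}$ coincide, and the derived-set operation is local. With this tool in hand, pick $p\in\Xi^{(k)}$. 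If $p\in\mathfrak{S}$, the fact that $p$ is an accumulation point of $\Xi^{(k-1)}$ yields some $q\in\Xi^{(k-1)}\cap(V_p\setminus\{p\})$, and the correspondence promotes it to $q\in(\Xi\setminus\mathfrak{S})^{(k-1)}$. If $p\notin\mathfrak{S}$, we invoke the sub-claim described next.

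The sub-claim reads: if $p\in\Xi^{(k)}\setminus\mathfrak{S}$ and $k\ge 1$ (still with $\mathfrak{S}^{(1)}=\emptyset$), then $p\in(\Xi\setminus\mathfrak{S})^{(k-1)}$. It is proved by strong induction on $k$. Given a sequence $p_n\to p$ in $\Xi^{(k-1)}\setminus\{p\}$, either infinitely many $p_n$ lie outside $\mathfrak{S}$---the sub-claim's induction hypothesis then places them in $(\Xi\setminus\mathfrak{S})^{(k-2)}$, so $p$ accumulates $(\Xi\setminus\mathfrak{S})^{(k-2)}$---or all $p_n$ lie inside $\mathfrak{S}$, in which case the correspondence at each $p_n$ furnishes points of $(\Xi\setminus\mathfrak{S})^{(k-2)}$ clustering at $p_n$, and a diagonal extraction yields a sequence in $(\Xi\setminus\mathfrak{S})^{(k-2)}$ converging to $p$. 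Combining ``$p$ accumulates $(\Xi\setminus\mathfrak{S})^{(k-2)}$'' with ``$p\in(\Xi\setminus\mathfrak{S})^{(k-2)}$'' (the latter being the induction hypothesis applied at $k-1$) gives $p\in(\Xi\setminus\mathfrak{S})^{(k-1)}$. The genuine obstacle is precisely this last sub-case, where $p\notin\mathfrak{S}$ but $p\in\overline{\mathfrak{S}}$: every neighborhood of $p$ meets $\mathfrak{S}$ infinitely often, so one cannot simply ``thin out the $p_n$ to avoid $\mathfrak{S}$''. The correspondence lemma at the intermediate points $p_n\in\mathfrak{S}$, combined with the diagonal argument, is what resolves the difficulty.
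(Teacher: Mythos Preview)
Your proposal is correct and follows essentially the same route as the paper: both reduce to the case $j=0$ by peeling off the layers of $\mathfrak{S}$ one at a time, and then argue by induction on $k$, splitting according to whether the chosen point of $\Xi^{(k)}$ lies in $\mathfrak{S}$ or not. Your version is somewhat more carefully organized---you isolate the ``correspondence lemma'' (that $\Xi$ and $\Xi\setminus\mathfrak{S}$ have the same derived sets on $V_p\setminus\{p\}$) and the sub-claim $p\in\Xi^{(k)}\setminus\mathfrak{S}\Rightarrow p\in(\Xi\setminus\mathfrak{S})^{(k-1)}$ as separate statements, whereas the paper uses them implicitly---but the underlying argument is the same.
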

\begin{proof}
Without loss of generality 
$\Xi$ has order $k$ and $\mathfrak{S}$ has order $j$. 
Notice that it is enough to prove the lemma in the case $j=0$, since every set $\mathfrak{S}_i$, $i=0,\dots,h$, is of Fuller order $0$ and 
\[\Xi\setminus \mathfrak{S}=(\cdots((\Xi\setminus \mathfrak{S}_0)\setminus \mathfrak{S}_1)\cdots )\setminus \mathfrak{S}_j).\]

Let us prove the property by induction on $k$, assuming that $\mathfrak{S}=\mathfrak{S}_0$. 
In the case $k=1$, we just need to notice that $\Xi\setminus \mathfrak{S}$ is nonempty and hence has nonnegative Fuller order.
Assume now that the property holds for $k-1$ and let us prove it for $k$. 
Consider a point $x\in \Xi_k$. If $x$ is in $\mathfrak{S}$, then there exists a neighborhood of $x$ which does not contain any  point of $\mathfrak{S}$ except $x$. 
Since $x$ is a density point for $\Xi_{k-1}$, we deduce that there exist points in $\Xi_{k-1}$ at positive distance from $\mathfrak{S}$. Hence $\Xi\setminus\mathfrak{S}$ has Fuller order at least $k-1$. Assume now that $x$ is in $\Xi\setminus\mathfrak{S}$. 
Notice that, by the induction hypothesis, for every neighborhood $U$ of $x$,
the set $U\cap ((\Xi_0\cup\cdots \cup \Xi_{k-1})\setminus \mathfrak{S})$ has Fuller order at least $k-2$. We can then extract a sequence in $((\Xi_0\cup\cdots \cup \Xi_{k-1})\setminus \mathfrak{S})_{k-2}$ converging to $x$. We deduce that 
$\Xi\setminus\mathfrak{S}$ has Fuller order at least $k-1$.
\end{proof}

As an immediate consequence, we get the following result. 

\begin{corollary}
\label{cor:imbriques}
Let $k\ge 1$, $j\ge 0$, and $\Xi\subset \R$ be the union of $\Xi^1,\dots,\Xi^k$. 
If $\Xi^i$ has Fuller order at most $j$ for every $i\in\{1,\dots,k\}$, then $\Xi$ has Fuller order at most $k(j+1)$.  
\end{corollary}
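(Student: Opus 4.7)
The plan is to iterate Lemma~\ref{lem:Fuller-order}, peeling off one summand $\Xi^i$ at a time. I would argue by contradiction, assuming that $\Xi$ has Fuller order at least $N:=k(j+1)+1$. Set $Y_0:=\Xi$ and, for $i=1,\dots,k$, $Y_i:=Y_{i-1}\setminus \Xi^i$, so that $Y_k=\emptyset$.

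I would then prove by induction on $i\in\{0,\dots,k\}$ that $Y_i$ has Fuller order at least $(k-i)(j+1)+1$. The case $i=0$ is the standing assumption. For the inductive step, $Y_{i-1}$ has Fuller order at least $(k-i+1)(j+1)+1\ge j+2>j$, while $\Xi^i$ has Fuller order at most $j$; Lemma~\ref{lem:Fuller-order} therefore applies and gives
\[
\text{Fuller order of }Y_i\ \ge\ \bigl((k-i+1)(j+1)+1\bigr)-j-1\ =\ (k-i)(j+1)+1.
\]
Specialising to $i=k$ yields that $\emptyset=Y_k$ has Fuller order at least $1$, contradicting the convention that $\emptyset$ has Fuller order $-1$. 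Hence $\Xi$ has Fuller order at most $k(j+1)$, as claimed.

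The argument is essentially bookkeeping: the only point that requires care at each step is the verification of the hypothesis $k'>j\ge 0$ of Lemma~\ref{lem:Fuller-order}, which is automatic because the current Fuller order is always at least $j+2$ throughout the induction. Consequently I do not foresee any genuine obstacle beyond handling the indexing; the whole statement is an ``additivity up to a shift'' consequence of the subtraction principle proved in the previous lemma.
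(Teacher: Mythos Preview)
Your argument is correct and is precisely the ``immediate consequence'' the paper alludes to: the authors do not write out a proof, and your iteration of Lemma~\ref{lem:Fuller-order} (peeling off one $\Xi^i$ at a time and tracking the drop in Fuller order) is exactly what is intended. The arithmetic and the verification of the hypothesis $k'>j\ge 0$ at each step are both fine.
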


\subsection{Jet spaces and transversality}\label{sec:transversality} 

Following \cite{CJT06}, for any nonempty open subset  $U$ of $M$ 
	we introduce:
	\begin{itemize}
		\item $JTU$: the jet space of the smooth vector fields on $U$,
		\item  $J^NTU$, $N\in\N$: the jet space of order $N$,
		\item  $J_{2m+1}^NTU$: the fiber product $J^NTU\times_U\dots\times_U J^NTU$ of $2m+1$ copies of $J^NTU$,
		\item $J_{q}^NTU$: the fiber of $J^NTU$ at $q\in U$,
		\item $J_{2m+1,q}^NTU$: the fiber of $J_{2m+1}^NTU$ at $q\in U$, 
		\item $T_{2m+1,N}$ the typical fiber of $J_{2m+1}^NTU$.
	\end{itemize}
	The spaces $JTU$, $J^NTU$ and $J_{2m+1}^NTU$ are endowed with the Whitney $C^\infty$ topology.
	
If $N$ is a positive integer and $f\in \Vect(U)$ (respectively, $\mathbf{f}\in \Vect(U)^{2m+1}$),
 we use $j^N(f)$ and $j_{q}^N(f)$ (respectively, $j^N(\mathbf{f})$ and $j_{q}^N(\mathbf{f})$) to denote respectively the jet of order $N$ associated with $f$ (respectively, the $(2m+1)$-tuple of jets of order $N$ associated with $\mathbf{f}$) and its evaluation at $q\in U$ (respectively, the evaluation of $j^N(\mathbf{f})$ at $q\in U$).

	Fix $N\in\N$ and let $P(n,N)$ be the set of all polynomial mappings 
	\be
		G:=\left(G^1,\dots, G^n\right):\R^n\to\R^n,\ \ \mathrm{deg}(G^i)\le N,\ \ \text{for every }\, 1\le i\le n.
	\ee
	Similarly, we call $P(n,N)^{2m+1}$ the set of all $(2m+1)$-tuples of elements in $P(n,N)$, that is,
	\be\label{eq:mtuples}
		P(n,N)^{2m+1}=\left\{\left(Q_1,\dots, Q_{2m+1}\right)\mid Q_i\in P(n,N), 1\le i\le 2m+1  \right\}.
	\ee
Assume from now on that $U$ is the domain of a coordinate chart $(x,U)$ centered at some $q\in U$. This allows one to identify  
the typical fiber $T_{2m+1,N}$ of 
	$J^N_{2m+1}TU$ with $P(n,N)^{2m+1}
	$ as explained below.
	There is a standard way \cite{BonKup} of introducing coordinates on the semi-algebraic set
	\be\label{eq:Omega}
		\Omega:=\left\{ 
		\left(Q_1,\dots, Q_{2m+1}\right)\in P(n,N)^{2m+1}\mid Q_1(0)\wedge \dots\wedge Q_{2m+1}(0)\ne 0 \right\}\subset P(n,N)^{2m+1},
	\ee
	which we briefly recall.	
		
		Let $\mc{K}_0=\{0\}$, and $\mc{K}_k$ be the set of $k$-tuples of ordered integers in $\{1,\dots,n\}$. If $f:\R^n\to \R$ is a homogeneous polynomial of degree $k$, and $\xi=(\xi_1,\dots,\xi_k)\in (\R^{n})^k$, the polarization of $f$ along $\xi$ is the real number
		\be
			Pf(\xi):=D_{\xi_1}\dots D_{\xi_k}f,
		\ee
where, for every $\eta\in \R^n$, $D_\eta f$ denotes the 
directional derivative
of $f$ along $\eta$.

Given $\wh{Q}\in \Omega$, we complete $\left(\wh{Q}_1(0),\dots, \wh{Q}_{2m+1}(0)\right)$ to a basis of $\R^n$ with $n-2m-1$ vectors $v_{2m+2},\dots, v_n\in \R^n$. There exists a neighborhood $V\subset \Omega$ of $\wh{Q}$ such that the map 
\begin{align}
	\mathrm{ev}:V&\to (\R^n)^n\\
			    Q&\mapsto \left( Q_1(0),\dots Q_{2m+1}(0),v_{2m+2},\dots, v_n \right)
\end{align}
associates with any element $Q\in V$ a basis of $\R^n$.  For $1\le i \le n$ and $Q\in V$, we also employ the notation $\mathrm{ev}(Q)_i$ to refer to the $i$-th component of $\mathrm{ev}(Q)$. In particular $\mathrm{ev}(Q)_i\in \R^n$. This allows to introduce a coordinate chart $X_V$ on $V$, in such a way that every $Q=\left(Q_1,\dots, Q_{2m+1}\right)\in V$ can be written with coordinates
	 \be
	 	 \left\{ X_{i,\sigma}^j\,\bigg\vert\, 1\le i\le 2m+1,\, 1\le j\le n,\, \sigma\in\mc{K}_k,\, 0\le k\le N  \right\},
	\ee
	 where the element $X_{i,\sigma}^j$ denotes the polarization of the $j$-th coordinate of the homogeneous part of degree $k=|\sigma|$ of $Q_i$ along the element 
	 $\left( \mathrm{ev}(Q)_{\sigma_1},\dots ,\mathrm{ev}(Q)_{\sigma_k} \right)$.

	Consider the now the chart  $(X_V,x)$ on the domain $V\times U\subset \Omega\times M$.
If $\sigma\in\mc{K}_k$, define $\sigma!=\sigma_1!\dots\sigma_k!$ and $x^\sigma=x_1^{\sigma_1}\dots x_k^{\sigma_k}$. In local coordinates, $Q_i$ is represented by
\be
	Q_i=\frac{\partial}{\partial x_i}+\sum_{\substack{1\le k\le N \\ \sigma\in \mc{K}_k}}\frac{x^\sigma}{\sigma!}X_{i,\sigma},\ \ X_{i,\sigma}=\sum_{j=1}^n X_{i,\sigma}^j\frac{\partial}{\partial x_j},
\ee
and 
$X_{i,\sigma}$ is a constant vector field.

If $1\le i\ne k\le 2m+1$, in these local coordinates we see that $[Q_i,Q_k](0)=Q_{ik}(0)=X_{k,i}-X_{i,k}$ and similarly, if $Q_{i^lk}$ denotes the $l$-fold iterated bracket $\mathrm{ad}_{Q_i}^l(Q_k)$, we deduce inductively that $Q_{i^lk}(0)=X_{k,i^l}+R_{i,k,l}$, where $R_{i,k,l}$ is a polynomial in the coordinates $X_{s,\sigma}^a$, with $1\le a\le n$, $1\le s\le 2m+1$, $|\sigma|\le l$ and $\sigma\ne j^l$. Similar computations can be carried out for all iterated brackets.
	
\begin{remark}\label{rem:coor}
Let $\left( (x,\psi),\pi^{-1}(U) \right)$ be the induced chart on $T^*U$, where $\psi=(\psi_r)_{r=1}^{n}$. In particular, we use $\lambda_\psi$ to denote the elements of $T_0^*M$ given in coordinates by $(0,\psi)$. The typical fiber $\widehat{T}_{2m+1,N}$ of the vector bundle $J^N_{2m+1}TU\times_U T^*U$ is isomorphic to $P(n,N)^{2m+1}\times \R^n$.  Clearly, $h_{ik}(\lambda_\psi)=\langle \psi,X_{k,i}\rangle-\langle \psi,X_{i,k}\rangle$ and, for $l\geq 1$, 
\be
h_{i^lk}(\lambda_\psi)=\langle \psi,Q_{i^lk}(0)\rangle=\langle \psi,X_{k,i^l}\rangle+\langle \psi,R'_{i,k,l}\rangle,
\ee
where $R'_{i,k,l}$ is a polynomial in the coordinates $\psi_r,X_{s,\sigma}^a$ with $1\le a,r\le n$, $1\le s\le 2m+1$, $|\sigma|\le l$ and $\sigma\ne j^l$. By an induction argument, 
$h_D(\lambda_\psi)$, with $D$ a multi-index, can be expressed as a polynomial function in terms of the coordinates $\psi_r,X_{s,\sigma}^a$. Therefore, this choice of the
chart $(X_V,x)$ allows one to see every $h_D$ and $h_D\circ \mathrm{ev}$  as a real-valued function on $J^N_{2m+1}TU\times_UT^*U$ and on its typical fiber $\widehat{T}_{2m+1,N}$, respectively, where $N$ is large enough. This will also be the case for any polynomial function in the $h_D$'s. 
\end{remark}	

The following result follows by standard transversality arguments (see, e.g., \cite{Abraham,SMT}). 

\begin{lemma}[Transversality Lemma]\label{lem:TL}
	Let $N\in\N$. 
	Let $\mc{B}$ be a closed subset of $J^N_{2m+1}TM$ and
	assume that 
for every $q\in M$ there exists a coordinate chart $(x,U)$ centered at $q$ such that 	
	$\mc{B}\cap J^N_{2m+1}TU$ is semi-algebraic in the coordinates $(X_V,x)$ introduced above.  
For every $q\in M$ let $\mc{B}_{q}:=\mc{B}\cap J_{2m+1,q}^NTM$. 
Let $\mc{V}$ be the open subset of $\Vect(M)_0^{2m+1}$ made of the $(2m+1)$-tuples $\mathbf{f}=(f_0,\dots,f_{2m})$ such that, for every $q\in M$, 
$j^{N}_{q}(\mathbf{f})\not\in \mc{B}_{q}$. Assume that $\mc{B}_{q}$ has codimension larger than or equal to $n+1$ in $J_{2m+1,q}^NTM$ for every $q\in M$. 
Then 
$\mc{V}$ is also dense in $\Vect(M)_0^{2m+1}$.
\end{lemma}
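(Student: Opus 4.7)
The plan is to combine a Whitney stratification of the semi-algebraic set $\mathcal{B}$ with Thom's jet transversality theorem, exploiting the codimension hypothesis to upgrade generic transversality into generic avoidance.

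First I would address openness. Given $\mathbf{f}\in\mathcal{V}$, the image of the continuous section $j^N(\mathbf{f})\colon M\to J^N_{2m+1}TM$ misses the closed set $\mathcal{B}$; a standard feature of the $C^\infty$-Whitney topology (the fact that basic neighborhoods allow pointwise control of a section and its derivatives) then produces a Whitney neighborhood of $\mathbf{f}$ entirely contained in $\mathcal{V}$. Thus $\mathcal{V}$ is open, and it remains to prove density.

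Next I would stratify. Covering $M$ by coordinate charts $(x,U)$ as in the hypothesis, each piece $\mathcal{B}\cap J^N_{2m+1}TU$ is semi-algebraic in the coordinates $(X_V,x)$ of Section~\ref{sec:transversality}. Classical results on semi-algebraic sets yield a Whitney stratification of each local piece into smooth submanifolds; using paracompactness of $M$ and local finiteness, these patch together into a countable, locally finite family $\{S_\alpha\}_{\alpha}$ of smooth submanifolds of $J^N_{2m+1}TM$ with $\mathcal{B}=\bigcup_\alpha S_\alpha$. Because $\mathcal{B}_q$ has codimension at least $n+1$ in the fiber $J^N_{2m+1,q}TM$ for every $q\in M$, local trivializations of the jet bundle show that each stratum $S_\alpha$ has codimension at least $n+1$ in the total space $J^N_{2m+1}TM$.

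Now I would invoke Thom's jet transversality theorem (in the form suitable for $(2m+1)$-tuples of vector fields, as in \cite{Abraham,SMT}): for each stratum $S_\alpha$, the set
\[
\mathcal{V}_\alpha=\{\mathbf{f}\in\Vect(M)^{2m+1}\mid j^N(\mathbf{f})\pitchfork S_\alpha\}
\]
is residual in $\Vect(M)^{2m+1}$ for the Whitney topology. Since $\dim M=n$ is strictly less than $\operatorname{codim} S_\alpha\ge n+1$, transversality to $S_\alpha$ is equivalent to $j^N(\mathbf{f})(M)\cap S_\alpha=\emptyset$.

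Finally, by the Baire property of the Whitney topology on $\Vect(M)^{2m+1}$ and the countability of the family $\{S_\alpha\}$, the intersection $\bigcap_\alpha \mathcal{V}_\alpha$ is residual, hence dense. Any $\mathbf{f}$ in this intersection satisfies $j^N_q(\mathbf{f})\notin \bigcup_\alpha S_\alpha=\mathcal{B}$ for every $q\in M$, i.e.\ $\mathbf{f}\in\mathcal{V}$. Since $\Vect(M)^{2m+1}_0$ is open in $\Vect(M)^{2m+1}$, we obtain density of $\mathcal{V}$ in $\Vect(M)^{2m+1}_0$. The main technical point to verify carefully is the global character of the stratification and the transfer of fiberwise codimension to total-space codimension; everything else is routine jet transversality.
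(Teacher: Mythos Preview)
Your proposal is correct and follows exactly the standard transversality argument the paper has in mind; in fact the paper does not give its own proof of this lemma at all, simply stating that it ``follows by standard transversality arguments (see, e.g., \cite{Abraham,SMT}).'' Your sketch (Whitney stratification of the semi-algebraic locus, Thom jet transversality applied stratum by stratum, codimension $>n$ forcing avoidance, and the Baire property to intersect over strata) is precisely what those references supply, so there is nothing to compare.
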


\section{Algebraic considerations} \label{sec:algPreli}
	\subsection{Decomposition of skew-symmetric matrices} We collect in this section some general facts regarding the algebraic structure of skew-symmetric matrices.
	For any $l\in\N$, we recall that the notation $\mathfrak{so}(l)$ stands for the linear space of $l\times l$ skew-symmetric real matrices. We begin by {\color{blue} recalling some useful properties concerning the Pfaffian of a skew-symmetric matrix}.
	
	\begin{lemma}\label{lemma:Pf}
		Let $A\in\mathfrak{so}(2m)$. 
		Then the following properties hold true.
		\begin{itemize}
			\item [i)] $\det(A)=\Pf(A)^2$, where
			 $\Pf(A)$, called the \emph{Pfaffian of $A$}, is a homogeneous polynomial
			 in the entries of $A$ of degree $m$.
			\item [ii)] There exists a $2m\times 2m$ skew-symmetric matrix $\adjp(A)$, called the \emph{adjoint Pfaffian of $A$}, such that
its entries are homogeneous polynomial of degree $m-1$ in the entries of $A$ and
			\[
				\adjp(A)A=\Pf(A)\mathrm{Id}_{2m}.
			\]
		\end{itemize} 
	\end{lemma}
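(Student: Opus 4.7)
The plan is to treat (i) and (ii) sequentially, both relying on an explicit combinatorial formula for the Pfaffian and on the standard transformation rule $\Pf(B^T A B) = \det(B)\Pf(A)$, which I would establish first from an exterior-algebra description.

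For part (i), I would define
\[
\Pf(A) := \frac{1}{2^m m!}\sum_{\sigma\in S_{2m}} \mathrm{sgn}(\sigma)\prod_{i=1}^m a_{\sigma(2i-1),\sigma(2i)},
\]
which manifestly is a homogeneous polynomial of degree $m$ in the entries $\{a_{ij}\}$ of $A$. Equivalently, $\Pf(A)$ is the scalar such that $\omega_A^{\wedge m}/m! = \Pf(A)\, e_1\wedge\dots\wedge e_{2m}$, where $\omega_A = \sum_{i<j} a_{ij}\, e_i\wedge e_j$; this viewpoint immediately yields the transformation rule $\Pf(B^T A B) = \det(B)\Pf(A)$ by functoriality of pullbacks on $\Lambda^2(\R^{2m})$. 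Since the identity $\det(A) = \Pf(A)^2$ is polynomial in the entries of $A$, it suffices to verify it on the Zariski-dense subset of invertible skew-symmetric matrices; by the structure theorem for skew-symmetric bilinear forms, every such $A$ can be written $A = B^T J_m B$ with $J_m$ the block-diagonal matrix consisting of $m$ copies of the standard $2\times 2$ symplectic block. A direct computation gives $\det(J_m) = \Pf(J_m) = 1$, so the transformation rule yields $\det(A) = \det(B)^2 = \Pf(A)^2$.

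For part (ii), the key input is Jacobi's theorem on Pfaffian cofactors: for the generic skew-symmetric $A$ of size $2m$, each entry of the classical adjugate factors as
\[
\mathrm{adj}(A)_{ij} = \varepsilon_{ij}\,\Pf(A)\,\Pf(A^{\{i,j\}}),
\]
where $A^{\{i,j\}}$ is the $(2m-2)\times(2m-2)$ skew-symmetric matrix obtained by deleting rows and columns indexed by $\{i,j\}$, and $\varepsilon_{ij}\in\{-1,+1\}$ is an appropriate sign; the diagonal entries $\mathrm{adj}(A)_{ii}$ vanish, since they coincide with the determinant of a skew-symmetric matrix of odd size $2m-1$. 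Defining
\[
\adjp(A)_{ij} := \varepsilon_{ij}\,\Pf(A^{\{i,j\}}) \text{ for } i\ne j, \qquad \adjp(A)_{ii} := 0,
\]
one obtains a skew-symmetric matrix whose entries are homogeneous polynomials of degree $m-1$ in the entries of $A$. The identity $\adjp(A)\cdot A = \Pf(A)\,\mathrm{Id}_{2m}$ then follows from the classical $\mathrm{adj}(A)\cdot A = \det(A)\,\mathrm{Id}_{2m} = \Pf(A)^2\,\mathrm{Id}_{2m}$ by cancellation: $\Pf(A)$ is a nonzero polynomial in the $a_{ij}$'s, hence not a zero divisor, so cancelling one factor is legitimate inside $\R[a_{ij}]_{i<j}$ and the resulting polynomial identity specializes to every concrete $A$.

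The main obstacle is justifying the cofactor factorization used in (ii). One may cite it as a classical result of Jacobi, but a self-contained proof proceeds by expanding $\omega_A^{\wedge m}$ with respect to the splitting $\omega_A = \omega_A^{\{i,j\}} + (\text{terms involving } e_i \text{ or } e_j)$, and matching the coefficient of $e_1\wedge\dots\wedge e_{2m}$ with the Laplace expansion of $\det(A)$ along rows $i$ and $j$. Some care is required with the signs $\varepsilon_{ij}$, which must be chosen so that $\adjp(A)$ turns out to be skew-symmetric; the antisymmetry $\varepsilon_{ij} = -\varepsilon_{ji}$ is forced by $A^{\{i,j\}} = A^{\{j,i\}}$ together with the skew-symmetry of $\mathrm{adj}(A)$.
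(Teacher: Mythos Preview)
Your argument is correct. The paper itself does not prove this lemma at all: it simply states that (i) is classical (citing a reference) and that (ii) can be found in the literature (citing \cite{pfaffian}). Your proposal, by contrast, supplies a self-contained argument---the exterior-algebra definition of $\Pf$ for the transformation rule, the normal form $A=B^TJ_mB$ on the Zariski-dense invertible locus for (i), and Jacobi's Pfaffian cofactor identity $\mathrm{adj}(A)_{ij}=\varepsilon_{ij}\Pf(A)\Pf(A^{\{i,j\}})$ followed by cancellation of the non-zero-divisor $\Pf(A)$ in $\R[a_{ij}]$ for (ii). All of this is standard and sound; the only place requiring genuine care, as you note, is pinning down the signs $\varepsilon_{ij}$ so that $\adjp(A)$ comes out skew-symmetric, but this is routine bookkeeping. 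In short, you have filled in what the paper leaves to citation.
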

	
	\begin{proof}
		Item i) is classical, and we refer the reader to \cite{Led} for a proof.
{\color{blue} Concerning Item ii), it can be found, for instance, in \cite[Equation (3.2)]{pfaffian}.}		
%For ii) we proceed as follows. 
%		Recall that $\mathfrak{so}(2m)$ 
%		is isomorphic to 
%		the exterior algebra
%		$\bigwedge^2(\R^{2m})$. If we consider $A$ as an element of $\bigwedge^2(\R^{2m})$, then we can define $\Pf(A)$ by the equality
%		\[
%			A^m=m!\Pf(A)\omega
%		\]
%		where $\omega$ is a generator of $\bigwedge^{2m}(\R^{2m})\simeq \R$. Then 
%		 $A^{m-1}/(m-1)!$ is well defined in $\bigwedge^{2m-2}(\R^{2m})$, which is dual to $\bigwedge^{2}(\R^{2m})$. 
%		
%		The dual of $A^{m-1}/(m-1)!$ in $\bigwedge^{2}(\R^{2m})$ is therefore associated with a skew-symmetric matrix $\mathrm{adj}^{\Pf}(A)$, the adjoint Pfaffian matrix of $A$, that realizes by construction the identity
%		\[
%			\mathrm{adj}^{\Pf}(A)A=\Pf(A)\mathrm{Id}_{2m}.
%		\]
%		The entries of $\adjp(A)$ are then homogeneous polynomials of degree $m-1$ in the entries of $A$, and this concludes the proof.  
	\end{proof}
	
	The next proposition collects a list of useful properties valid for general skew-symmetric matrices of size $k$.
	
	\begin{prop}\label{prop:princmin}
		Let $k\in \N$ and $A\in\mathfrak{so}(k)$ be nonzero. 
		Then the following holds true.
		\begin{itemize}
			\item [i)] The rank of $A$ is an even integer $1\leq 2m_0\leq k$ and there exists a nonzero principal minor of order $2m_0$. As a consequence, there exists a permutation matrix $P$ such that 
			\be\label{eq:UM}
				P^TAP=\left(\begin{array}{cc}
					A_1 & A_2 \\
					-A_2^T & A_3
				\end{array}\right), 
			\ee
			where  $A_1\in \mathfrak{so}(2m_0)$ is invertible, 
			$A_2\in M_{2m_0,k-2m_0}(\R)$, 
			and $A_3\in \mathfrak{so}(k-2m_0)$.
			
			\item[ii)] With $P^TAP$ presented as in \eqref{eq:UM} one has
			\[
				\ker(P^TAP)=\mathrm{span}\{ (-A_1^{-1}A_2x_2,x_2)\mid x_2\in \R^{k-2m_0} \}.
			\]
In particular, 
$A_1$, $A_2$, and $A_3$ satisfy the relation
			\be\label{eq:parskew}
				A_2^TA_1^{-1}A_2+A_3=0.
			\ee
			\item[iii)] Let $e_1,\dots,e_{k-2m_0}$
			be the canonical basis of $\R^{k-2m_0}$. Define
			\[
				v_i=\left( -\adjp(A_1)A_2e_i,\Pf(A_1)e_i\right),\ \ 
				1\leq i\leq k-2m_0,
			\]
			where $\adjp(A_1)$ denotes the adjoint Pfaffian of $A_1$ introduced in Lemma~\ref{lemma:Pf}.
			Then the family $v_1,\dots,v_{k-2m_0}$
			is a basis of $\ker (P^TAP)$, and the coordinates of each $v_i$, for $i=1,\dots,k-2m_0$, are homogeneous polynomials 
of degree $m_0$ in the entries of $A$. 
		\end{itemize}
	\end{prop}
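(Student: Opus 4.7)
For part i), the key observation is that the coefficient of $\lambda^{k-r}$ in the characteristic polynomial of $A$ is, up to sign, the sum of all $r\times r$ principal minors of $A$. Setting $r=\Rank A$, which is even (a classical fact, provable via $\det A=\det A^T=\det(-A)=(-1)^k\det A$ applied to minors, or via the spectral decomposition of real skew-symmetric matrices), this coefficient is nonzero. Hence at least one $r\times r$ principal minor is nonzero, and $r=2m_0$. Permuting the rows and columns so that the chosen principal submatrix is moved to the top-left corner produces $P$ and the claimed block decomposition, with $A_1\in\mathfrak{so}(2m_0)$ invertible.

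For part ii), since the permutation is an invertible change of basis, $\Rank P^TAP=2m_0=\Rank A_1$. A vector $(x_1,x_2)\in\R^{2m_0}\oplus \R^{k-2m_0}$ lies in $\ker(P^TAP)$ iff $A_1x_1+A_2x_2=0$ and $-A_2^Tx_1+A_3x_2=0$. The first equation gives $x_1=-A_1^{-1}A_2x_2$, and substituting into the second yields $(A_2^TA_1^{-1}A_2+A_3)x_2=0$. But the kernel has dimension $k-2m_0$ (by rank-nullity), so $x_2\in\R^{k-2m_0}$ must be free. This forces $A_2^TA_1^{-1}A_2+A_3=0$, proving \eqref{eq:parskew} and the kernel description simultaneously.

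For part iii), by Lemma~\ref{lemma:Pf}, $\adjp(A_1)A_1=\Pf(A_1)\mathrm{Id}_{2m_0}$ and $\Pf(A_1)\ne 0$ (since $\det(A_1)=\Pf(A_1)^2\ne 0$). Hence $A_1^{-1}=\Pf(A_1)^{-1}\adjp(A_1)$ and
\[
v_i=\Pf(A_1)\bigl(-A_1^{-1}A_2e_i,\,e_i\bigr),
\]
so by ii) each $v_i$ belongs to $\ker(P^TAP)$. Linear independence is clear from the second component $\Pf(A_1)e_i$, and $\dim\ker(P^TAP)=k-2m_0$ gives that $\{v_1,\dots,v_{k-2m_0}\}$ is a basis. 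The homogeneity assertion is a bookkeeping step: entries of $\adjp(A_1)$ are homogeneous of degree $m_0-1$ in entries of $A$, while entries of $A_2$ are linear in entries of $A$, so $-\adjp(A_1)A_2e_i$ has entries homogeneous of degree $m_0$; and $\Pf(A_1)e_i$ is itself homogeneous of degree $m_0$.

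The only delicate point is part i), where we must ensure the nonzero minor of maximal order can be chosen \emph{principal}; the characteristic polynomial argument sketched above is the cleanest way around the temptation to invoke the spectral theorem (which would only produce a nonzero principal minor after an orthogonal change of basis, not a permutation). The remaining parts are straightforward Schur-complement manipulations combined with the properties of the Pfaffian recorded in Lemma~\ref{lemma:Pf}.
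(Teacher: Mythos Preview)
Your proof is correct and follows essentially the same route as the paper's: the characteristic-polynomial argument for the existence of a nonzero principal minor in part~i), the Schur-complement computation for part~ii), and the rescaling by $\Pf(A_1)$ together with Lemma~\ref{lemma:Pf} for part~iii). The only place where the paper is marginally more explicit is in justifying why the coefficient of $\lambda^{k-2m_0}$ is nonzero (it invokes diagonalizability of $A$ over $\mathbb{C}$), but this is implicit in your appeal to the rank.
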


\begin{proof}
	We begin by i). First note that the conclusion is equivalent to prove that $A$ admits a $2m_0\times 2m_0$ nonzero principal minor, i.e., the determinant of an $2m_0\times 2m_0$ principal submatrix. Recall that, for $1\leq l\leq k$, the coefficient of $(-1)^lx^{k-l}$ of the characteristic polynomial of any $k\times k$ matrix is equal to the sum of its $l\times l$ principal minors. If $A$ is a $k\times k$ skew-symmetric matrix, notice that its principal submatrices are themselves skew-symmetric. One deduces that the coefficients of $(-1)^lx^{k-l}$ in the characteristic polynomial $P_A$ of $A$ are zero if $l$ is odd and sums of squares if $l$ is even, according to i) of Lemma~\ref{lemma:Pf}. Moreover, if the rank of $A$ is equal to $2m_0$, then 
	$P_A(x)=x^{k-2m_0}Q(x)$ with $Q(0)\neq 0$ since $A$ is diagonalizable over $\mathbb{C}$. Hence 
	the coefficient of $x^{k-2m_0}$ of $P_A$ is nonzero, yielding the existence of a $2m_0\times 2m_0$ nonzero principal minor.

		We pass now to~Point ii). Let us consider any element $w=(w_1,w_2)\in \ker(P^TAP)$. Computing the product $P^TAP w=0$, and recalling that $A_1$ is invertible, we obtain the relations
		\[
			w_1=-A_1^{-1}A_2w_2,\quad (A_2^TA_1^{-1}A_2+A_3)w_2=0.
		\]
		By assumption, $\ker(P^TAP)$ has dimension $k-2m_0$, therefore there exists a basis $w_2^1,\dots,w_2^{k-2m_0}$ of $\R^{k-2m_0}$, such that the elements
		\[
			(A_1^{-1}A_2w_2^i,w_2^i),\quad i= 1,\dots,k-2m_0 ,
		\]
		belong to $\ker(P^TAP)$ and are linearly independent. In particular the $(k-2m_0)\times (k-2m_0)$ skew-symmetric matrix $(A_2^TA_1^{-1}A_2+A_3)$ has a $(k-2m_0)$-dimensional kernel, and therefore it is the zero matrix.
		
		As for Point iii), it is sufficient to notice that 
		the elements
		\[
			v_i:=\Pf(A_1)(-A_1^{-1}A_2e_i,e_i),\quad i=1,\dots,k-2m_0,
		\]
		form a basis of $\ker(P^TAP)$ and that, by Lemma~\ref{lemma:Pf}, 
		\[v_i =(-\adjp(A_1)A_2e_i,\Pf(A_1)e_i),\quad i=1,\dots,k-2m_0,\]
		 and, in particular, the coordinates of $v_i$ are homogeneous polynomials of degree $m_0$ in the entries of $A$. 
	\end{proof}

	\subsection{Consequences on the structure of the Goh matrix}
	
	We apply here below Proposition~\ref{prop:princmin} to the skew-symmetric Goh matrix $H_{II}$ defined in \eqref{eq:Goh}. 
	
	Let $(q(\cdot),\lambda(\cdot),u(\cdot))$ be a time-extremal triple of \eqref{eq:contrsys}, and assume that $t^*\in [0,T]$ is such that $1\leq \Rank{(H_{II}(t^*))}=2m_0\leq 2m$. Then, up to a permutation of the basis of $\R^{2m}$ we can present $H_{II}(t^*)$ in the block form
	\be\label{eq:blockform}
		H_{II}(t^*)=\left(\begin{array}{cc}
			H_{II}^{2m_0}(t^*) & E(t^*) \\
			-E(t^*)^T & F(t^*)
			\end{array}\right),
	\ee
	where $H_{II}^{2m_0}(t^*)\in M_{2m_0}(\R)$ and $F(t^*)\in M_{2(m-m_0)}(\R)$ are skew-symmetric matrices, $H_{II}^{2m_0}(t^*)$ is invertible and $E(t^*)\in M_{2m_0,2(m-m_0)}(\R)$. 
	Then the following holds true.
	
	\begin{prop}\label{prop:paramker}
		 There exist a relatively open interval $\mc{I}\subset [0,T]$ containing $t^*$, and smooth functions $v_1,\dots,v_{2(m-m_0)}:[0,T]\to \R^{2m}$ such that:
		 \begin{itemize}
		 	\item [i)] for every $i=1,\dots,2(m-m_0)$ and every $t\in \mc{I}$, letting $e_i$ be the $i$-th element of the canonical basis of $\R^{2(m-m_0)}$, 
		 	\[
		 		v_i(t)=\left(  \begin{array}{c} -\adjp( H_{II}^{2m_0}(t))E(t) e_i \\ \Pf(H_{II}^{2m_0}(t))e_i  \end{array}  \right)
		 	\] 
		 	is a $2m$-dimensional vector whose components are homogeneous polynomials of degree $m_0$ in the entries $h_{ij}(t)$ of the Goh matrix;
			\item [ii)] if $t\in \mc{I}$ is such that $\Rank{(H_{II}(t))}=2m_0$, then 
			\[
				\ker(H_{II}(t))=\Span{\{v_1(t),\dots,v_{2(m-m_0)}(t)\}};
			\]
			\item [iii)] if $t\in \mc{I}$ is such that $\Rank{(H_{II}(t))}=2m_0$, the non-trivial relations expressed by the matrix equality
			\[
				E(t)^T\adjp (H_{II}^{2m_0}(t))E(t)+\Pf(H_{II}^{2m_0}(t))F(t)=0
			\]
			are homogeneous polynomial relations of degree $m_0+1$ in the entries $h_{ij}(t)$ of the Goh matrix.  
		\end{itemize}
	\end{prop}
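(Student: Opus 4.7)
The plan is to upgrade Proposition~\ref{prop:princmin} to a parametrised version along the extremal lift by fixing, once and for all, a permutation $P$ that exposes an invertible $2m_0\times 2m_0$ principal submatrix of $H_{II}(t^*)$, and then shrinking the time interval so that this principal block remains invertible.

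More precisely, I would first choose $P$ and the block decomposition \eqref{eq:blockform} at $t^*$. Since the entries of $H_{II}(t)$ depend continuously on $t$ through $\lambda(t)$ and $\det H_{II}^{2m_0}(t^*)=\Pf(H_{II}^{2m_0}(t^*))^2\ne 0$, there is a relatively open interval $\mc{I}\subset[0,T]$ containing $t^*$ on which the same permutation still yields $H_{II}^{2m_0}(t)$ invertible, with $E(t)$ and $F(t)$ defined accordingly. I would then set $v_i(t)$ by the formula of item~i): the regularity of $t\mapsto H_{II}(t)$ is inherited by $v_i(t)$, and the degree claim follows at once from Lemma~\ref{lemma:Pf}, since $\Pf(H_{II}^{2m_0}(t))$ is homogeneous of degree $m_0$ and the entries of $\adjp(H_{II}^{2m_0}(t))$ are homogeneous of degree $m_0-1$ in the entries of $H_{II}^{2m_0}(t)$, hence in those of $H_{II}(t)$.

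Items~ii) and iii) would then follow by a pointwise application of Proposition~\ref{prop:princmin} on $\mc{I}$. Indeed, at any $t\in\mc{I}$ with $\Rank{H_{II}(t)}=2m_0$, the invertible principal block $H_{II}^{2m_0}(t)$ plays the role of the block $A_1$ in Proposition~\ref{prop:princmin}, so Proposition~\ref{prop:princmin}(iii) yields the basis property of $v_1(t),\dots,v_{2(m-m_0)}(t)$, and Proposition~\ref{prop:princmin}(ii) yields the identity $E(t)^T H_{II}^{2m_0}(t)^{-1}E(t)+F(t)=0$. Multiplying this identity by $\Pf(H_{II}^{2m_0}(t))$ and using $\adjp(H_{II}^{2m_0}(t))\,H_{II}^{2m_0}(t)=\Pf(H_{II}^{2m_0}(t))\mathrm{Id}_{2m_0}$ from Lemma~\ref{lemma:Pf} produces
\[
E(t)^T\adjp(H_{II}^{2m_0}(t))E(t)+\Pf(H_{II}^{2m_0}(t))F(t)=0,
\]
whose entries are homogeneous polynomials of degree $m_0+1$ in the entries of $H_{II}(t)$, as required in item~iii).

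I do not expect any substantive obstacle: Proposition~\ref{prop:princmin} already encodes the linear-algebraic core, and the present statement is only its parametrised translation. The only delicate point will be to lock down the permutation $P$ at $t^*$ and prevent it from varying with $t$, so that the formulas for $v_i(t)$ and the relation in iii) become genuine polynomial expressions in the coefficients of the full Goh matrix. This polynomial character is precisely what will make item~iii) usable further in the paper as an algebraic obstruction, rather than as a mere kernel condition.
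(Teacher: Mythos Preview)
Your proposal is correct and is exactly the approach the paper takes: the paper presents Proposition~\ref{prop:paramker} as a direct application of Proposition~\ref{prop:princmin} to the block form \eqref{eq:blockform} of the Goh matrix, and does not even give a separate proof. You have simply spelled out the details---fixing the permutation at $t^*$, using continuity to obtain $\mc{I}$, and invoking Lemma~\ref{lemma:Pf} for the degree count---precisely as intended.
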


	\section{
Iterated accumulations of points in $\Sigma$ with invertible Goh matrix	
}
	\label{sec:acc}\label{s:acc-2m}

	Let $(q(\cdot),\lambda(\cdot),u(\cdot))$ be an extremal triple of \eqref{eq:contrsys}. Consider the set
	\be\label{eq:sigmaet}
		\Sigma^{2m}:=\Sigma\cap \{ t\in [0,T]\mid \det{H_{II}(t)}\ne 0 \},
	\ee
	where $\Sigma$ is  the set constructed in Definition~\ref{defi:O}.  
	In analogy with Definition~\ref{defi:Fuller}, we define $\Sigma^{2m}_0$ to be the set of isolated points of $\Sigma^{2m}$ and, inductively, we set $\Sigma^{2m}_j$ to be the set of isolated points of $\Sigma^	{2m}\setminus(\bigcup_{i=0}^{j-1}\Sigma^{2m}_i)$.

	The starting point of the study of accumulations of singularities in $\Sigma^{2m}$ is the  following result.
	
	\begin{prop}\label{prop:singeven}
		Let $t^*\in \Sigma^{2m}\setminus \Sigma^{2m}_0$. Then 
		\be\label{eq:firstres}
			\|H_{II}(t^*)^{-1}h_{0I}(t^*)\|=1.
		\ee
	\end{prop}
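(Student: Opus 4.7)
The plan is to combine the differentiation Lemma~\ref{lemma:diff} with an analysis of the one-sided limits of the extremal control at the switching times accumulating at $t^*$.

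\smallskip
\emph{Step 1: Identification via Lemma~\ref{lemma:diff}.} Since $t^*\in \Sigma^{2m}\setminus \Sigma^{2m}_0$, I extract a sequence $(t_l)_{l\in\N}\subset \Sigma^{2m}$ with $t_l\to t^*$ and $t_l\ne t^*$. By \eqref{eq:Sigmainh=0}, $h_I(t_l)=0$ for every $l$, hence each $h_i$ ($i=1,\dots,2m$) vanishes at $\lambda(t_l)$. Applying Lemma~\ref{lemma:diff} simultaneously to $\varphi=h_i$ I obtain a common $u^*\in \ov B_1^{2m}$ with
\[
\{h_0,h_i\}(\lambda(t^*))+\sum_{j=1}^{2m}u^*_j\{h_j,h_i\}(\lambda(t^*))=0,\qquad i=1,\dots,2m.
\]
Since $\{h_0,h_i\}=h_{0i}$ and $\{h_j,h_i\}=-(H_{II})_{ij}$, this reads $h_{0I}(t^*)=H_{II}(t^*)u^*$. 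As $H_{II}(t^*)$ is invertible, $u^*=H_{II}(t^*)^{-1}h_{0I}(t^*)$, and the membership $u^*\in \ov B_1^{2m}$ yields immediately the bound $\|H_{II}(t^*)^{-1}h_{0I}(t^*)\|\le 1$.

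\smallskip
\emph{Step 2: The reverse inequality $\|u^*\|\ge 1$.} Here I exploit the PMP formula \eqref{eq:upmp}, which gives $\|u(s)\|=1$ almost everywhere on $R_q$. The idea is to refine the choice of the sequence $(t_l)$ so that each $t_l$ is the endpoint of a regular arc contained in $R_q$ of length $\tau_l\to 0$. On such an arc, the one-sided limit $u(t_l^+):=\lim_{s\downarrow t_l} h_I(s)/\|h_I(s)\|$ exists, has norm one, and, from $\dot h_I(t_l^+)=h_{0I}(t_l)-H_{II}(t_l)u(t_l^+)$ combined with the fact that $h_I$ grows from zero in the direction $u(t_l^+)$, satisfies
\[
(H_{II}(t_l)+\mu_l I)\,u(t_l^+)=h_{0I}(t_l),\qquad \mu_l:=\|\dot h_I(t_l^+)\|\ge 0.
\]
A Taylor-type estimate anchored on the endpoint conditions $h_I(t_l)=h_I(t_l+\tau_l)=0$ and $\tau_l\to 0$ forces $\mu_l\to 0$. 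Passing to the limit in the displayed identity and using continuity of $H_{II}$ and $h_{0I}$ along the extremal then gives $\|H_{II}(t^*)^{-1}h_{0I}(t^*)\|=\lim_l\|u(t_l^+)\|=1$, completing the proof.

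\smallskip
\emph{Main obstacle.} The delicate step is Step~2: proving the existence of an appropriate subsequence with the required regular-arc structure, and establishing $\mu_l\to 0$. The difficulty is that $\Sigma$ near $t^*$ may combine isolated switching times with boundaries of singular subarcs, and the several local configurations must be treated uniformly; moreover, the Taylor estimate has to contend with the possible unboundedness of $\ddot h_I$ near a zero of $h_I$. A conceptually cleaner route, consistent with the authors' intent, is to invoke the switching analysis of~\cite{agrachev2016switching}, where this norm-one condition at accumulation points is identified as the multi-input analogue of the classical chattering characterisation.
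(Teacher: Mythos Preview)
Your Step~1 is correct and coincides with the paper's argument: Lemma~\ref{lemma:diff} applied to the $h_i$'s yields $u^*=H_{II}(t^*)^{-1}h_{0I}(t^*)\in\ov B_1^{2m}$, hence $\|u^*\|\le 1$.

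Step~2, however, is not a proof. The chain you propose (existence of a regular arc abutting each $t_l$, existence of the one-sided limit $u(t_l^+)$, the identity $(H_{II}(t_l)+\mu_l I)u(t_l^+)=h_{0I}(t_l)$, and finally $\mu_l\to 0$) contains several unjustified links. The most serious is $\mu_l\to 0$: your ``Taylor-type estimate'' would need control of $\ddot h_I$ on the arc, and as you note yourself this quantity is unbounded near zeros of $h_I$ because $\dot u$ blows up there. No workaround is given. Even the preliminary claim that each $t_l$ is an endpoint of a regular arc of shrinking length is not established; the zero set of $h_I$ near $t^*$ could in principle have complicated structure, and you do not rule out singular subarcs or accumulation on one side. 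In short, Step~2 as written is a sketch of a plausible strategy with acknowledged obstacles, not an argument.

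The paper's route for the reverse inequality is different in mechanism and is the one you allude to at the very end, but you do not identify how it actually runs. One argues by contradiction: if $\|u^*\|<1$, i.e.\ $h_{0I}(t^*)\in H_{II}(t^*)B_1^{2m}$, then \cite[Theorem~3.4]{agrachev2016switching} (which holds for extremal, not just time-optimal, trajectories) forces $h_I\equiv 0$ on a relative neighborhood $\mc I$ of $t^*$. Differentiating $h_I\equiv 0$ on $\mc I$ gives $u(t)=H_{II}(t)^{-1}h_{0I}(t)$ a.e., so the extremal is an integral curve of a smooth Hamiltonian vector field on $\mc I$; hence $u$ is smooth there and $t^*\notin\Sigma$, contradicting $t^*\in\Sigma^{2m}$. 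The external input from \cite{agrachev2016switching} is precisely what replaces your missing estimate: it supplies the dynamical fact that strict interiority of the singular feedback at a zero of $h_I$ propagates $h_I=0$ locally.
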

	
	\begin{proof}
		Since $t^*\in\Sigma^{2m}\subset \Sigma$, 
		{\color{blue} we have that
		$\det(H_{II}(t^*))\neq 0$ and
		we deduce from \eqref{eq:Sigmainh=0} that 
		$h_I(t^*)=0$}. Moreover, since $t^*\notin\Sigma_0^{2m}$, 
		there exists a nontrivial sequence $(t_l)_{l\in\N}\subset \Sigma^	{2m}$ converging to $t^*$ such that $h_I(t_l)=0$ for every $l\in\N$. 		
		Applying Lemma~\ref{lemma:diff} to $\varphi=h_i$, $i\in I$, 
		we 
		infer the existence of $u^*\in \ov{B}_1^{2m}$ such that 
		\[
			h_{0I}(t^*)-H_{II}(t^*)u^*=0,
		\]
		that is, we deduce that $h_{0I}(t^*)\in H_{II}(t^*)\ov{B}_1^{2m}$. 
		
		Assume by contradiction that 
		{\color{blue} 
		$h_{0I}(t^*)\in H_{II}(t^*)B_1^{2m}$.
		Then we deduce from \cite[Theorem 3.4]{agrachev2016switching}
		that $h_I$ vanishes identically in a relative neighborhood $\mc{I}\subset [0,T]$ of $t^*$. 
Note that 	\cite[Theorem 3.4]{agrachev2016switching} is stated for time-optimal trajectories, but it actually holds true for extremal trajectories, since 
its proof only 
relies on the properties of the extremal flow characterized by the PMP. 
		
%		$\|H_{II}(t^*)^{-1}h_{0I}(t^*)\|<1$. Then $t^*$ satisfies the hypotheses of Theorem~\ref{thm:iminthesphere}, and therefore there exists a relative neighborhood $\mc{I}\subset [0,T]$ of $t^*$ such that $h_I|_{\mc{I}}\equiv 0$. 

Upon shrinking $\mc{I}$, %without loss of generality 
we can }%also 
assume that $\det(H_{II}(t))\neq 0$ for every $t\in \mc{I}$. Differentiating the relation $h_I|_{\mc{I}}\equiv 0$, we find that $u(t)=H_{II}(t)^{-1}h_{0I}(t)$ holds true a.e. on $\mc{I}$.
		The differential system generated by the Hamiltonian function 
		\[
		H^0(p)=\langle p,f_0(q)\rangle+\sum_{i=1}^{2m}(H_{II}(p)^{-1}h_{0I}(p))_i\langle p,f_i(q)\rangle,\ \ p\in T^*M,\ \ q=\pi(p),
		\]
		where $(H_{II}(p)^{-1}h_{0I}(p))_i$ is the $i$-th component of $H_{II}(p)^{-1}h_{0I}(p)$, is well-defined on the set $\{ p\in T^*M\mid \Rank{(H_{II}(p))}=2m \}$. Moreover, the time-extremal triple $(q(\cdot),\lambda(\cdot),u(\cdot))$ satisfies 
		\[
		\dot\lambda(t)=\vec{H}^0(\lambda(t)),
		\]
		almost everywhere on $\mc{I}$, 
		that is, it is 
		an integral curve of $\vec{H}^0$ on $\mc{I}$. But this forces $u(\cdot)$ to be smooth on $\mc{I}$, contradicting the assumption that $t^*$ is an element of $\Sigma^{2m}$. The contradiction argument yields
		\[
			\|H_{II}(t^*)^{-1}h_{0I}(t^*)\|=1,
		\]
		and the statement follows.
	\end{proof}

As a direct consequence of Lemma~\ref{lemma:Pf} and Proposition~\ref{prop:singeven}, we deduce the following. 	
	\begin{cor}\label{cor:firstcond}
		Let $t^*\in \Sigma^{2m}\setminus \Sigma_0^{2m}$. Then, defining the symmetric $2m\times 2m$ matrix $S_H(t^*):=\adjp(H_{II})^2(t^*)$, one has
		\be\label{eq:firstcond}
			\langle S_H(t^*)h_{0I}(t^*), h_{0I}(t^*) \rangle+\det(H_{II}(t^*))=0.	
		\ee
		In particular, $\langle S_H(t^*)h_{0I}(t^*), h_{0I}(t^*) \rangle\neq 0$. 
	\end{cor}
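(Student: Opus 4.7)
The plan is to convert the norm equality from Proposition~\ref{prop:singeven} into the claimed quadratic-form identity by using the Pfaffian-adjoint formalism from Lemma~\ref{lemma:Pf}. Since $H_{II}(t^*)$ is invertible (as $t^*\in\Sigma^{2m}$), Lemma~\ref{lemma:Pf} item ii) yields the explicit formula
\[
H_{II}(t^*)^{-1}=\Pf(H_{II}(t^*))^{-1}\adjp(H_{II}(t^*)),
\]
well-defined because $\Pf(H_{II}(t^*))^2=\det(H_{II}(t^*))\neq 0$ by item i).

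Then I would square the identity $\|H_{II}(t^*)^{-1}h_{0I}(t^*)\|=1$ from Proposition~\ref{prop:singeven}, obtaining
\[
\Pf(H_{II}(t^*))^{-2}\,\langle \adjp(H_{II}(t^*))^{T}\adjp(H_{II}(t^*))\,h_{0I}(t^*),h_{0I}(t^*)\rangle=1.
\]
The key observation is that $\adjp(H_{II}(t^*))$ is itself skew-symmetric (item ii) of Lemma~\ref{lemma:Pf}), so $\adjp(H_{II}(t^*))^{T}=-\adjp(H_{II}(t^*))$, which makes $\adjp(H_{II}(t^*))^{T}\adjp(H_{II}(t^*))=-\adjp(H_{II}(t^*))^{2}=-S_H(t^*)$. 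Substituting and using $\Pf(H_{II}(t^*))^{2}=\det(H_{II}(t^*))$ gives
\[
-\det(H_{II}(t^*))^{-1}\langle S_H(t^*)h_{0I}(t^*),h_{0I}(t^*)\rangle=1,
\]
which is exactly \eqref{eq:firstcond} after clearing the denominator.

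Finally, the \emph{in particular} statement follows immediately: since $t^*\in\Sigma^{2m}$ forces $\det(H_{II}(t^*))\neq 0$, the identity \eqref{eq:firstcond} forces $\langle S_H(t^*)h_{0I}(t^*),h_{0I}(t^*)\rangle=-\det(H_{II}(t^*))\neq 0$. There is really no obstacle here: the entire argument is a bookkeeping step built on Proposition~\ref{prop:singeven} and the algebraic identities in Lemma~\ref{lemma:Pf}; the only subtlety worth flagging is the sign produced by the skew-symmetry of $\adjp(H_{II}(t^*))$, which is what turns $\|\adjp(H_{II}(t^*))h_{0I}(t^*)\|^{2}$ into $-\langle S_H(t^*)h_{0I}(t^*),h_{0I}(t^*)\rangle$ rather than its positive.
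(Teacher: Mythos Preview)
Your argument is correct and is exactly the computation the paper has in mind when it says the corollary is a direct consequence of Lemma~\ref{lemma:Pf} and Proposition~\ref{prop:singeven}; you have simply made the bookkeeping explicit. The only thing the paper adds implicitly is that $S_H(t^*)=\adjp(H_{II}(t^*))^2$ is symmetric because the square of a skew-symmetric matrix is symmetric, which you use as well.
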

	
	\begin{defi}\label{defi:bracketmax}
		Define the smooth functions $\left( \phi_\ell \right)_{\ell\in \N}$ and the matrix-valued functions  
		$\left( \Phi_\ell \right)_{\ell\in \N}$ 
		on $T^*M$
by
		\begin{align}
				\phi_0(\lambda)&=\langle S_H(\lambda)h_{0I}(\lambda), h_{0I}(\lambda) \rangle+\det(H_{II}(\lambda)),\label{eq:phi0}\\
			\Phi_0(\lambda)&=\left(\begin{array}{cc}  h_{0I}(\lambda) & -H_{II}(\lambda) \\ \{h_0,\phi_0\}(\lambda) &  \{h_I,\phi_0\}(\lambda)^T  \end{array}\right)\in 
			M_{2m+1}(\R),\nonumber
		\end{align}
and, inductively with respect to $\ell\ge 0$, 
		\begin{equation}\label{eq:phij}
\phi_{\ell+1}(\lambda)=\det(\Phi_\ell(\lambda)),\quad			\Phi_{\ell+1}(\lambda)=\left(\begin{array}{cc}  h_{0I}(\lambda) & -H_{II}(\lambda) \\ \{h_0,\phi_{\ell+1}\}(\lambda) &  \{h_I,\phi_{\ell+1}\}(\lambda)^T  \end{array}\right)\in 
					M_{2m+1}(\R).
		\end{equation}
	\end{defi}
	
\begin{remark}\label{rem:polynom}
By Point ii) of Lemma~\ref{lemma:Pf}, we see that $\phi_0$ in \eqref{eq:phi0} is a polynomial function in the elements $h_{ik}$ for $i\in \{0,\dots,2m\}$ and $k\in I$. Moreover, we deduce inductively that all the functions $(\phi_\ell)_{\ell\in \N}$ are polynomial functions in the elements $\mathrm{ad}_{h_{i_1}}\circ \cdots \circ \mathrm{ad}_{h_{i_\nu}}(h_{jk})(\lambda)$ for $\nu\in \N$ and $i_1,\dots,i_\nu,j,k\in\{0,\dots,2m\}$.
\end{remark}
It is useful to make the following observation on the structure of the constraint $\phi_\ell(\lambda)=0$. Its proof can be obtained by an easy inductive argument. 

\begin{lemma}\label{lem:ouf0}
Let $\ell\in \N$ and $\lambda \in T^*M$. Then  
		\begin{equation}\label{eq:firstdet}
\phi_\ell(\lambda)=			\mathrm{ad}_{h_0}^\ell(\phi_0)(\lambda)\det(H_{II}(\lambda))^\ell+B_\ell(\lambda),
		\end{equation}
		where $B_\ell(\lambda)$ is 
		the evaluation of a polynomial depending only on $\ell$ at 
		a point whose coordinates are 
		$h_{ik}(\lambda)$ 
for $i\in \{0,\dots,2m\}$ and $k\in I$, 
and $\mathrm{ad}_{h_{i_1}}\circ \cdots \circ \mathrm{ad}_{h_{i_\nu}}(\phi_0)(\lambda)$ for $0\le \nu\le \ell$ and $i_1,\dots,i_\nu\in\{0,\dots,2m\}$, with the property that if $\nu=\ell$ then $(i_1,\dots, i_\nu)\ne (0,\dots,0)$.
\end{lemma}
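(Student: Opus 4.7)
I plan to prove Lemma~\ref{lem:ouf0} by induction on $\ell$. The base case $\ell=0$ is tautological: choosing $B_0=0$ makes \eqref{eq:firstdet} reduce to $\phi_0=\phi_0\cdot 1+0$.

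For the inductive step, I assume the decomposition holds at stage $\ell$ and expand $\phi_{\ell+1}=\det(\Phi_\ell)$ along the last row of the $(2m+1)\times(2m+1)$ matrix $\Phi_\ell$. The $(2m+1,1)$-minor equals $\det(-H_{II})=\det(H_{II})$ (the sign $(-1)^{2m}$ from the size of $H_{II}$ cancels the cofactor sign $(-1)^{(2m+1)+1}$), so the expansion takes the form
\[
\phi_{\ell+1}=\{h_0,\phi_\ell\}\det(H_{II})+\sum_{j=1}^{2m}(-1)^j\{h_j,\phi_\ell\}\,N_j,
\]
where each $N_j$ is the determinant of a $2m\times 2m$ matrix formed from the column $h_{0I}$ and the columns of $-H_{II}$ with the $j$-th column omitted; hence a polynomial in the $h_{ik}$ coordinates admitted in the lemma. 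I then apply the Leibniz rule for the Poisson derivation $\{h_j,\cdot\}$ to the inductive expression $\phi_\ell=\mathrm{ad}_{h_0}^\ell(\phi_0)\det(H_{II})^\ell+B_\ell$, obtaining
\[
\{h_j,\phi_\ell\}=\mathrm{ad}_{h_j}\mathrm{ad}_{h_0}^\ell(\phi_0)\det(H_{II})^\ell+\ell\,\mathrm{ad}_{h_0}^\ell(\phi_0)\det(H_{II})^{\ell-1}\{h_j,\det(H_{II})\}+\{h_j,B_\ell\}.
\]
The $j=0$ contribution of the first summand, multiplied by $\det(H_{II})$, produces the announced principal term $\mathrm{ad}_{h_0}^{\ell+1}(\phi_0)\det(H_{II})^{\ell+1}$.

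The remaining job is to collect all other contributions into a valid $B_{\ell+1}$; this bookkeeping is the main obstacle. Every iterated-bracket symbol of $\phi_0$ produced is of the form $\mathrm{ad}_{h_j}\mathrm{ad}_{h_{i_1}}\cdots\mathrm{ad}_{h_{i_\nu}}(\phi_0)$ with $\nu\leq\ell$, so its depth is at most $\ell+1$, as required. For the tuple constraint in the extreme case where the depth equals $\ell+1$: if the outer index is $j\geq 1$ the prepended tuple $(j,i_1,\dots,i_\nu)$ is automatically non-zero; if $j=0$ and $\nu=\ell$, then the inductive hypothesis forces $(i_1,\dots,i_\ell)$ inside $B_\ell$ to be non-zero, and hence so is the prepended tuple $(0,i_1,\dots,i_\ell)$. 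One finally verifies that no $\mathrm{ad}_{h_0}^{\ell+1}(\phi_0)$ term can appear outside the isolated principal term, so the leading coefficient is identified cleanly and the remainder has the structure required of $B_{\ell+1}$.
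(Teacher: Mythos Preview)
Your proof is correct and follows precisely the inductive route the paper indicates (the paper itself only says ``Its proof can be obtained by an easy inductive argument''): cofactor expansion of $\det(\Phi_\ell)$ along the last row isolates the term $\{h_0,\phi_\ell\}\det(H_{II})$, the Leibniz rule applied via the inductive hypothesis extracts $\mathrm{ad}_{h_0}^{\ell+1}(\phi_0)\det(H_{II})^{\ell+1}$, and your depth/tuple bookkeeping on the remaining $\mathrm{ad}$-words in $\phi_0$ is exactly what is needed.
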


The following result illustrates the relation between the functions $\phi_\ell$ and the Fuller order of the set $\Sigma^{2m}$.
	
		\begin{prop}\label{prop:accmaxrank}
		Let $\ell\in\N$ and $t^*\in \Sigma^{2m}\setminus \bigcup_{j=0}^\ell\Sigma^	{2m}_j$. Then
		$\phi_j(\lambda(t^*))=0$ for every $j=0,\dots,\ell$. 
	\end{prop}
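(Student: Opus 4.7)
The plan is to argue by induction on $\ell$, using Lemma~\ref{lemma:diff} at each step to promote an accumulation of zeros of the previous $\phi_{\ell-1}$ to the vanishing of $\phi_\ell$, exploiting the fact that the matrix $\Phi_{\ell-1}$ is designed precisely to encode the two relations produced by the lemma.

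The base case $\ell=0$ is the content of Corollary~\ref{cor:firstcond}, since the hypothesis reads $t^*\in \Sigma^{2m}\setminus \Sigma^{2m}_0$. For the inductive step, suppose the statement is known for $\ell-1$ and fix $t^*\in \Sigma^{2m}\setminus\bigcup_{j=0}^{\ell}\Sigma^{2m}_j$. Because $\bigcup_{j=0}^{\ell-1}\Sigma^{2m}_j\subset\bigcup_{j=0}^{\ell}\Sigma^{2m}_j$, the inductive hypothesis already yields $\phi_j(\lambda(t^*))=0$ for every $j=0,\dots,\ell-1$, so only the vanishing of $\phi_\ell(\lambda(t^*))$ remains to be proved.

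The crucial observation is that $t^*$, belonging to $\Sigma^{2m}\setminus\bigcup_{j=0}^{\ell-1}\Sigma^{2m}_j$ but not to $\Sigma^{2m}_\ell$, is not an isolated point of $\Sigma^{2m}\setminus\bigcup_{j=0}^{\ell-1}\Sigma^{2m}_j$. I would therefore extract a sequence $(t_l)_{l\in\N}$ in this set with $t_l\to t^*$ and $t_l\neq t^*$. Each $t_l$ lies in $\Sigma$, so by \eqref{eq:Sigmainh=0} one has $h_I(t_l)=0$, and by the inductive hypothesis applied to $t_l$ one has $\phi_{\ell-1}(\lambda(t_l))=0$. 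Then I apply Lemma~\ref{lemma:diff} to the finite family of test functions $\{h_1,\dots,h_{2m},\phi_{\ell-1}\}$, each of which vanishes at $\lambda(t_l)$ for every $l\in\N$: this produces a single vector $u^*\in \ov{B}^{2m}_1$ such that
\be
\{h_0,\varphi\}(\lambda(t^*))+\sum_{i=1}^{2m}u_i^*\{h_i,\varphi\}(\lambda(t^*))=0
\ee
for every $\varphi$ in this family.

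Taking $\varphi=h_i$ for $i=1,\dots,2m$ amounts to the vector relation $h_{0I}(\lambda(t^*))-H_{II}(\lambda(t^*))u^*=0$, while taking $\varphi=\phi_{\ell-1}$ gives $\{h_0,\phi_{\ell-1}\}(\lambda(t^*))+\{h_I,\phi_{\ell-1}\}(\lambda(t^*))^T u^*=0$. Put together, these two identities precisely state that the nonzero column vector $\binom{1}{u^*}\in \R^{2m+1}$ lies in the kernel of the matrix $\Phi_{\ell-1}(\lambda(t^*))$ defined in \eqref{eq:phij}. Consequently $\phi_\ell(\lambda(t^*))=\det(\Phi_{\ell-1}(\lambda(t^*)))=0$, which closes the induction.

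The only delicate point is making sure that the same $u^*$ works simultaneously for all test functions $\varphi$ — this is precisely the content of Lemma~\ref{lemma:diff}, where $u^*$ is obtained as a limit of averages of the control $u(\cdot)$ along a subsequence and therefore depends only on the sequence $(t_l)$, not on the particular $\varphi$. The invertibility of $H_{II}$ at $t^*$ (guaranteed by $t^*\in\Sigma^{2m}$) is not used to solve for $u^*$ here, but it will be essential for further arguments; in the present proposition, the key structural input is the block layout of $\Phi_{\ell-1}$, which converts the two outputs of Lemma~\ref{lemma:diff} into a single kernel vector.
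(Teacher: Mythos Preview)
Your proof is correct and follows essentially the same route as the paper's: induction on $\ell$ with base case given by Corollary~\ref{cor:firstcond}, extraction of a sequence in $\Sigma^{2m}\setminus\bigcup_{j=0}^{\ell-1}\Sigma^{2m}_j$ accumulating at $t^*$, and application of Lemma~\ref{lemma:diff} simultaneously to $h_1,\dots,h_{2m}$ and $\phi_{\ell-1}$ to produce a nonzero kernel vector for $\Phi_{\ell-1}$. Your indexing $\phi_\ell=\det(\Phi_{\ell-1})$ is in fact the correct reading of Definition~\ref{defi:bracketmax}; the paper's displayed conclusion contains a harmless index slip.
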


	\begin{proof}
	First notice that, since $\Sigma^{2m}$ is relatively open in $\Sigma$, 
	one has $\Sigma^{2m}_j=\Sigma^{2m}\cap\Sigma_j$ for every $j\ge 0$.

		We proceed by induction, observing that the case $\ell=0$ follows
		from Corollary~\ref{cor:firstcond}.

		Assume the conclusion to be true for some integer $\ell\ge 0$, and let us establish it for $\ell+1$. Pick $t^*\in \Sigma^{2m}\setminus \bigcup_{j=0}^{\ell+1}\Sigma^{2m}_j$ and  a sequence $(t_w)_{w\in\N}\subset  \Sigma^{2m}\setminus \bigcup_{j=0}^\ell\Sigma^{2m}_j$ converging to $t^*$. 
		The inductive step yields that 
		$\phi_j(t_w)=0$ for $j=0,\dots,\ell$ and $w\in \N$.	
		The equalities $\phi_j(t^*)=0$, $j=0,\dots,\ell$, follow by continuity, and we are left to prove that $\phi_{\ell+1}(t^*)=0$. 
		Lemma~\ref{lemma:diff} applies both to $\varphi=\phi_\ell$ and $\varphi=h_j$, $j\in I$, and allows to conclude that there exists $u^*\in\ov{B}_1^{2m}$ such that
		\be
		\Phi_{\ell+1}(\lambda(t^*))
	\begin{pmatrix}1\\ u^*\end{pmatrix}	
=0,
		\ee
where $\Phi_{\ell+1}$ is defined as in \eqref{eq:phij}.
	Hence, $\phi_{\ell+1}(\lambda(t^*))=\det(\Phi_{\ell+1}(\lambda(t^*)))=0$. 
	\end{proof}

In the next lemma, using the fact that the conditions $\phi_\ell=0$ define independent constraints on the jets, we deduce from Proposition~\ref{prop:accmaxrank}
and Lemma~\ref{lem:TL} that the set $\Sigma^{2m}$ has Fuller order at most $2n-1$. 

	\begin{lemma}\label{lemma:dominantzero}
	There exists an open and dense set $\mc{V}_{2m}\subset \Vect(M)_0^{2m+1}$ 
	such that, for every $\mathbf{f}=(f_0,\dots,f_{2m})\in \mc{V}_{2m}$ and every extremal triple $(q(\cdot),\lambda(\cdot),u(\cdot))$ of \eqref{eq:contrsys},
	\begin{equation}\label{eq:decomp2m}
		\Sigma^{2m}=\bigcup_{j=0}^{2n-1}\Sigma_j^{2m}.
	\end{equation}
	\end{lemma}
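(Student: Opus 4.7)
The plan is to apply the Transversality Lemma (Lemma~\ref{lem:TL}) to a semi-algebraic closed ``bad'' set $\mc{B}\subset J^N_{2m+1}TM$ engineered so that its avoidance forces, through Proposition~\ref{prop:accmaxrank}, the desired bound on the Fuller order of $\Sigma^{2m}$. For an integer $N$ sufficiently large (of order $2n$, so that all iterated brackets appearing in the $\phi_\ell$'s with $\ell\le 2n-1$ are represented) and for every $q\in M$, fix a chart as in Section~\ref{sec:transversality} and let $\mc{B}_q$ be the Zariski closure of the set of jets $\mathbf{j}\in J^N_{2m+1,q}TM$ for which there exists $\psi\in \R^n\setminus\{0\}$ with
\begin{equation}
h_I(\psi,\mathbf{j})=0,\quad \det(H_{II}(\psi,\mathbf{j}))\ne 0,\quad \phi_\ell(\psi,\mathbf{j})=0\ \text{for every }\ell=0,\dots,2n-1.
\end{equation}
By Remarks~\ref{rem:coor} and~\ref{rem:polynom} each of these quantities is polynomial in $\psi$ and in the jet coordinates, so $\mc{B}$ is semi-algebraic.

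Assuming that $\mc{B}_q$ has codimension at least $n+1$ in $J^N_{2m+1,q}TM$ for every $q\in M$, Lemma~\ref{lem:TL} provides an open and dense subset $\mc{V}_{2m}\subset \Vect(M)_0^{2m+1}$ such that $j^N_q(\mathbf{f})\notin \mc{B}_q$ for every $q\in M$ and every $\mathbf{f}\in \mc{V}_{2m}$. For such an $\mathbf{f}$ and every extremal triple $(q(\cdot),\lambda(\cdot),u(\cdot))$ of \eqref{eq:contrsys}, a hypothetical time $t^*\in \Sigma^{2m}\setminus\bigcup_{j=0}^{2n-1}\Sigma^{2m}_j$ would satisfy $h_I(\lambda(t^*))=0$ and $\det(H_{II}(\lambda(t^*)))\ne 0$ by \eqref{eq:Sigmainh=0} and by the very definition of $\Sigma^{2m}$, and $\phi_\ell(\lambda(t^*))=0$ for every $\ell=0,\dots,2n-1$ by Proposition~\ref{prop:accmaxrank}. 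This would force $j^N_{q(t^*)}(\mathbf{f})\in \mc{B}_{q(t^*)}$, contradicting $\mathbf{f}\in \mc{V}_{2m}$, and yielding \eqref{eq:decomp2m}.

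The remaining, and central, task is the codimension estimate. I would work on the product $(\R^n\setminus\{0\})\times J^N_{2m+1,q}TM$. The $2m$ linear equations $h_I(\psi)=0$ impose exactly $2m$ independent conditions, thanks to the linear independence of $f_1(q),\dots,f_{2m}(q)$ built into $\Vect(M)_0^{2m+1}$. It then suffices to show that the $2n$ polynomial equations $\phi_0=\cdots=\phi_{2n-1}=0$ are jointly independent of the previous ones on the open locus $\{\det H_{II}\ne 0\}$. Here Lemma~\ref{lem:ouf0} is decisive: writing
\[
\phi_\ell=\mathrm{ad}_{h_0}^\ell(\phi_0)\,\det(H_{II})^\ell+B_\ell,
\]
the purely iterated adjoint $\mathrm{ad}_{h_0}^\ell(\phi_0)$ depends on jet coordinates of $f_0$ of order roughly $\ell+1$ that do not appear in $\phi_0,\dots,\phi_{\ell-1}$, since the remainder $B_\ell$ is built from strictly mixed adjoints (no pure $\mathrm{ad}_{h_0}^\ell$ term of $\phi_0$ enters $B_\ell$). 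Fixing $\psi^0\ne 0$ in the annihilator of $\Span\{f_1(q),\dots,f_{2m}(q)\}$ (nonempty because $2m+1\le n$) and perturbing these distinguished high-order jet coordinates one at a time, one obtains a lower triangular derivative matrix for the map $\mathbf{j}\mapsto (\phi_0,\dots,\phi_{2n-1})(\psi^0,\mathbf{j})$ of full rank $2n$. Consequently, the bad set in the product space has codimension $2m+2n$; the projection onto $J^N_{2m+1,q}TM$ drops at most $n$ dimensions, yielding codimension at least $n+2m\ge n+2>n+1$ (using $m\ge 1$, the case $m=0$ being trivial as there are no controls to switch). The main obstacle is precisely this triangular independence argument: it requires a careful combinatorial inspection of the brackets entering each $\mathrm{ad}_{h_0}^\ell(\phi_0)$ and of the order of the new jet coordinates they introduce, to guarantee that at every level $\ell$ a genuinely fresh coordinate appears in $\phi_\ell$.
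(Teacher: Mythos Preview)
Your proposal is correct and follows essentially the same route as the paper: define a bad set in $J^N_{2m+1}TM\times_M T^*M$ by the vanishing of $\phi_0,\dots,\phi_{2n-1}$ on $\{\det H_{II}\ne 0\}$, prove the required codimension via the triangular structure coming from Lemma~\ref{lem:ouf0} (the paper makes this explicit by writing $\phi_l$ in local coordinates and isolating the fresh jet variables $X_{i,0^{l+1}}$ entering through $h_{0^{l+1}i}$), project, and invoke Lemma~\ref{lem:TL}. The only difference is bookkeeping in the final codimension count: the paper does \emph{not} add the $2m$ constraints $h_I=0$ to the bad set, so it gets codimension $2n$ upstairs and recovers the missing unit after projection by exploiting the homogeneity of each $\phi_l(\lambda_\psi)=0$ in $\psi$ (effectively reducing the fiber dimension to $n-1$), whereas you trade that homogeneity trick for the $2m$ extra linear conditions $h_I=0$ to reach $n+2m\ge n+2$ directly.
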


\begin{proof}
The proof of the lemma follows a classical strategy found, e.g., in \cite{BonKup}. 
Let us construct the 
set 
$\widehat{\mc{B}}\subset J^{2n+1}_{2m+1}TM\times_M T^*M$ 
by
\be\begin{aligned}
			\widehat{\mc{B}}=\bigg\{& \big(j^{2n+1}_{q}(\mathbf{f}),\lambda\big)
\,\bigg|(q,\lambda)\in T^*M,\;\mathbf{f}=(f_0,\dots,f_{2m})\in \Vect(M)_0^{2m+1},\\
						       &\det(H_{II}(\lambda))\ne0,\;\phi_0(\lambda)=\dots=
				\phi_{2n-1}(\lambda)=0\bigg\},
		\end{aligned} 
\ee
where 
$\phi_0,\dots,\phi_{2n-1}$ are defined in \eqref{eq:phi0} and \eqref{eq:phij}. 
We denote then by $\mc{B}$ the canonical projection of $\widehat{\mc{B}}$ onto $J^{2n+1}_{2m+1}TM$.
Similarly, for $q\in M$, we define $\widehat{\mc{B}}_q\subset J^{2n+1}_{2m+1,q}TM\times T^*_qM$ by 
\be
	\widehat{\mc{B}}_q:=\widehat{\mc{B}}\cap J^{2n+1}_{2m+1,q}TM\times T^*_qM,
\ee
and by $\mc{B}_q$ the canonical projection of $\widehat{\mc{B}}_q$ onto $J^{2n+1}_{2m+1,q}TM$.

Notice that, for every coordinate chart $(x,U)$, $\widehat{\mc{B}}\cap J^{2n+1}_{2m+1}TU\times T^*U$ is an algebraic subset of 
$J^{2n+1}_{2m+1}TU\times T^*U$ for the coordinates  $(X_V,x,\psi)$ introduced in Section~\ref{sec:transversality}.
Hence, $\mc{B}\cap J^{2n+1}_{2m+1}TU$ is a semi-algebraic subset  of 
$J^{2n+1}_{2m+1}TU$.

We now consider the set $\mc{V}_{2m}$ of vector fields 
$\mathbf{f}\in\Vect(M)_0^{2m+1}$ verifying the following: 
for every $q\in M$, 
$j^{2n+1}_{q}(\mathbf{f})\notin \mc{B}_{q}$.  We claim that \eqref{eq:decomp2m} holds true if 
$\mathbf{f}\in \mc{V}_{2m}$. In fact, arguing by contradiction, assume that for such an $\mathbf{f}$ and an extremal triple $(q(\cdot),\lambda(\cdot),u(\cdot))$ of \eqref{eq:contrsys},
there exists $t^*\in \Sigma^{2m}\setminus \bigcup_{j=0}^{2n-1}\Sigma_j^{2m}$. 
Then, Proposition~\ref{prop:accmaxrank} implies 
that
\begin{equation}\label{eq:dellaP}
		 \big(j^{2n+1}_{q(t^*)}(\mathbf{f}),\lambda(t^*)\big)
\in \widehat{\mc{B}},
		\end{equation} 
yielding that $j^{2n+2}_{q(t^*)}(\mathbf{f})\in \mc{B}_{q(t^*)}$ and contradicting the fact that 
$\mathbf{f}\in \mc{V}_{2m}$. The claim follows.

We conclude the proof of Lemma~\ref{lemma:dominantzero} thanks to Lemma~\ref{lem:TL}, by showing that for every 
$q\in M$, the set $\mc{B}_q$ defined above 
 has codimension larger than or equal to $n+1$ in 
 $J^{2n+1}_{2m+1,q}TM$.
 
Let $q\in M$, and consider a local coordinate chart $(x,U)$ on $M$ centered at $q$. Lift this chart to a coordinate chart $\big((x,\psi),\pi^{-1}(U)\big)$ on $T^*U$ as in Remark~\ref{rem:coor}, and recall that 
$
J^{2n+1}_{2m+1,q}TM\times T^*_qM$ is isomorphic to $P(n,2n+1)^{2m+1}\times \R^n$. 
By taking into account Remark~\ref{rem:polynom},  
 the map 
 \be
 	\begin{aligned}
		E_\phi^{2n}:
		P(n,2n+1)^{2m+1}\times \R^n
		&\to \mathbb{R}^{2n},\\
		(Q,\psi)&\mapsto \big(\phi_0(\lambda_\psi),\cdots,\phi_{2n-1}(\lambda_\psi)\big),
	\end{aligned}
 \ee
is well defined. 
 Then, up to the identification of 
 $J^{2n+1}_{2m+1,q}TU\times T^*_qU$ and $P(n,2n+1)^{2m+1}\times \R^n$,  $\widehat{\mc{B}}_q=\{(Q,\psi)\in (E_\phi^{2n})^{-1}(0)\mid \det(H_{II}(\lambda_\psi))\ne 0\}$.

In order to prove that $\mc{B}_q$ has codimension larger than or equal to $n+1$ we first show  
that $\widehat{\mc{B}}_q$ has codimension $2n$ by proving that $E_\phi^{2n}$ is a submersion at every point of $\widehat{\mc{B}}_q$. To that purpose, we compute in local coordinates the maps $\phi_i(\lambda_\psi)$ for $0\leq i\leq 2n-1$.

Following \eqref{eq:phi0} and recalling that $S_H(\lambda)\in M_{2m}(\R)$ is symmetric, we have
\be\label{eq:phi02}
	\phi_0(\lambda)=\sum_{i,j=1}^{2m}P_{i,j}(\lambda)h_{0i}(\lambda)h_{0j}(\lambda)+R_0(\lambda),
\ee
where the $P_{i,j}(\lambda)$ and $R_0(\lambda)$ are polynomial functions in the variables
$h_{st}(\lambda)$, with $1\leq s,t\leq 2m$, and not all the $P_{i,j}(\lambda)$ are zero. 
In local coordinates this gives
\begin{equation}\label{eq:phi00}
	\phi_0(\lambda_\psi)=\sum_{i,j=1}^{2m}P_{i,j}(\psi)\langle \psi,X_{0,i}\rangle\langle \psi,X_{0,j}\rangle+R_0(\psi),
\end{equation}
where the $P_{i,j}(\psi)$ and $R_0(\psi)$ are now polynomial functions in the variables $\langle \psi,X_{s,t}\rangle$, with $1\leq s,t\leq 2m$, and not all the $P_{i,j}(\psi)$ are zero.

From Lemma~\ref{lem:ouf0}, \eqref{eq:phi02} and an easy inductive argument, one deduces that, for $0\leq l\leq 2n-1$,
\be\label{eq:phil}
	\begin{aligned}
		\phi_l(\lambda)=\det(H_{II}(\lambda))^l\sum_{i,j=1}^{2m}P_{i,j,l}(\lambda)\left(h_{0^{l+1}i}(\lambda)h_{0j}(\lambda)+h_{0i}(\lambda)h_{0^{l+1}j}(\lambda)\right)+R_{0,l}(\lambda),
	\end{aligned}
\ee
where the $P_{i,j,l}(\lambda)$ are (not all zero) polynomial functions in the variables 
$h_{st}(\lambda)$,  $1\leq s,t\leq 2m$ and $R_{0,l}(\lambda)$ is a polynomial function in the variables
$\mathrm{ad}_{h_{i_1}}\circ \cdots \circ \mathrm{ad}_{h_{i_\nu}}(\phi_0)(\lambda)$ for $0\le \nu\le l$ and $i_1,\dots,i_\nu\in\{0,\dots,2m\}$, with the property that if $\nu=l$ then $(i_1,\dots, i_\nu)\ne (0,\dots,0)$. 
In local coordinates one deduces that, for $0\leq l\leq 2n-1$,
\be\label{eq:phill}
	\begin{aligned}
		\phi_l(\lambda_\psi)=&\det(H_{II}(\lambda_\psi))^l\sum_{i,j=1}^{2m}P_{i,j,l}(\psi)\bigg(\langle \psi,X_{0^{l+1},i}\rangle\langle \psi,X_{0,j}\rangle+\langle \psi,X_{0,i}\rangle\langle \psi,X_{0^{l+1},j}\rangle\bigg)\\+&R_{0,l}(\psi),
	\end{aligned}
\end{equation}
where the $P_{i,j,l}(\psi)$ are polynomial functions in the variables $\langle \psi,X_{s,t}\rangle$, $1\leq s,t\leq 2m$ and $R_{0,l}(\psi)$ is a polynomial function in the variables
$\langle \psi,X_{i_1\cdots i_\nu}\rangle$, for  
$0\le \nu\le l$ and $i_1,\dots,i_\nu\in\{0,\dots,2m\}$, with the property that if $\nu=l$ then 
$(i_1,\dots, i_\nu)\ne (0,\dots,0)$. From \eqref{eq:phi00} and \eqref{eq:phill}, one deduces that the map $E_\phi^{2n}$ is a submersion at every point of $\widehat{\mc{B}}_q$, since the polynomials $P_{i,j,l}$ are not all zero. 

We proved that $\widehat{\mc{B}}_q$ has codimension $2n$, from which 
it follows readily that the codimension of $\mc{B}_q$ is larger than or equal to $2n-n+1=n+1$ by projection, where the extra term $+1$ is due to the homogeneity of each of the relations $\phi_l(\lambda_\psi)=0$ with respect to $\lambda_\psi$. This concludes the proof of Lemma~\ref{lemma:dominantzero}.
	\end{proof}

	\section{
Iterated accumulations of points in $\Sigma$ with singular Goh matrix	
}\label{s:acc-notinv}

	We consider in this section the complementary case in which the Goh matrix $H_{II}$ does not have full rank. 

Let us fix $1\le a \le m$, and consider
the sets
	\begin{align}\label{eq:sigmaeta}
		\Sigma^{2(m-a)}&=\Sigma\cap \{ t\in [0,T]\mid \Rank{H_{II}(t)}=2(m-a) \},\\
				(T^*M)^{2(m-a)}&=T^*M\cap\left\{ \lambda\in T^*M\big\vert\, \Rank{H_{II}(\lambda)}=2(m-a) \right\}.
	\end{align}
	Observe that the notation is consistent with the notation $\Sigma^{2m}$ introduced in \eqref{eq:sigmaet}, which effectively corresponds to the case $a=0$.
	
 By point i) of Proposition~\ref{prop:princmin}, for every $\lambda\in (T^*M)^{2(m-a)}$ there exists a permutation matrix $P_\lambda\in M_{2m}(\R)$ such that 
	\be\label{eq:blockforma}
		P_\lambda^TH_{II}(\xi)P_\lambda=\begin{pmatrix}
		H_{II}^{2(m-a),\lambda}(\xi) & E^\lambda(\xi) \\
		-E^\lambda(\xi)^T & F^\lambda(\xi)
	\end{pmatrix}\ \ \text{for every }\xi\in T^*M,
	\ee
	where $H_{II}^{2(m-a),\lambda}:T^*M\to M_{2(m-a)}(\R)$, $E^\lambda:T^*M\to M_{2(m-a),2a}(\R)$ and  $F^\lambda:T^*M\to M_{2a}(\R)$ are matrix-valued functions, with the property that $H_{II}^{2(m-a),\lambda}(\lambda)$ is 
	of maximal rank (equal to $2(m-a)$).

	\begin{remark}\label{Jl0}
We assume the permutation 
matrix 
$P_\lambda$ to be chosen according to the following algorithmic rule: pick the subset $J^\lambda_0$ of $I$ of cardinality $2(m-a)$ such that the matrix extracted from  $H_{II}(\lambda)$ with row and column indices in $J^\lambda_0$ is invertible  and which is minimal for the lexicographic order among all the subsets of $I$ with the same property. (Subsets of $I$ of cardinality $2(m-a)$ are here identified with strings of indices of length $2(m-a)$.)
Then if $J^\lambda_0=\{j_1,\dots,j_{2(m-a)}\}$ and $I\setminus J^\lambda_0=\{\ell_1,\dots,\ell_{2a}\}$  with 
$j_1<\dots <j_{2(m-a)}$ and $\ell_1<\dots<\ell_{2a}$, pick as permutation the reordering of $1,\dots,2m$ into $j_1,\dots,j_{2(m-a)},\ell_1,\dots,\ell_{2a}$.
\end{remark}
		
	Consider the smooth vector-valued functions 
	\be\label{eq:ker}
	v_i^\lambda:
T^*M\to\R^{2m}, \quad \xi\mapsto\left(  \begin{array}{c} -\adjp( H_{II}^{2(m-a),\lambda}(\xi))E^\lambda(\xi) e_i \\ \Pf(H_{II}^{2(m-a),\lambda}(\xi))e_i  \end{array}  \right),\qquad i=1,\dots, 2a,
	\ee
	where  $e_1,\dots,e_{2a}$ denotes the canonical basis of $\R^{2a}$, with the convention that  $v_i^\lambda(\xi)=e_i$ 
	 when $a=m$.
	By point iii) of Proposition~\ref{prop:princmin}, there exists a neighborhood $O_\lambda\subset T^*M$ of $\lambda$ such that the collection $\{ v_i^\lambda(\xi)\mid 1\le i\le 2a \}$ parametrizes the kernel of $P_\lambda^T H_{II}(\xi)P_\lambda$ for every $\xi\in O_{\lambda}\cap (T^*M)^{2(m-a)}$.
	We also define for $1\le i\le 2a$, the functions
	\begin{align}\label{eq:kappa}
		\nonumber\kappa_i^\lambda:
T^*M&\to \R,\\
		\xi&\mapsto \langle P_\lambda^T h_{0I}(\xi), v_i^\lambda(\xi) \rangle,
	\end{align}
	and, finally, letting
	\begin{align}\label{eq:G}
	\nonumber	G^\lambda:
T^*M&\to \mathfrak{so}(2a),\\
		\xi&\mapsto E^\lambda(\xi)^T\adjp( H_{II}^{2(m-a),\lambda}(\xi))E^\lambda(\xi)+\Pf(H_{II}^{2(m-a),\lambda}(\xi))F^\lambda(\xi),
	\end{align}
	we list all of the $a(2a-1)$ independent entries of $G^\lambda$ as a collection of functions $g^\lambda_l:
T^*M\to \R$, for $1\le l\le a(2a-1)$.
Notice that $G^\lambda(\xi)=F^\lambda(\xi)=H_{II}(\xi)$ if $a=m$.
	
	\begin{prop}\label{prop:condSigmaa}
		Let $1\leq a\le m$ and  consider, for $1\le i\le 2a$ and $1\le l\le a(2a-1)$, the functions $\kappa^\lambda_i$ and $g^\lambda_l$ defined in \eqref{eq:kappa} and \eqref{eq:G}, respectively. 
		Consider an extremal triple $(q(\cdot),\lambda(\cdot),u(\cdot))$.
		Then the following holds true:
		\begin{itemize}
			\item [(i)] if $t\in\Sigma^{2(m-a)}$, then $g^{\lambda(t)}_l(t)=0$, $l=1,\dots,a(2a-1)$;
			\item [(ii)] if moreover $t\in\Sigma^{2(m-a)}\setminus \Sigma_0$, we also have $\kappa^{\lambda(t)}_i(t)=0$ for every $i=1,\dots,2a$.
		\end{itemize}
	\end{prop}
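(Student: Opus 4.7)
The strategy is to read off the two assertions from the algebraic structure of skew-symmetric matrices of deficient rank, combined with the differentiation Lemma~\ref{lemma:diff}. Part (i) is essentially a pointwise application of Proposition~\ref{prop:princmin} at $t$, while part (ii) additionally exploits the fact that $h_I$ vanishes along a sequence converging to $t$ to locate $h_{0I}(t)$ in the image of $H_{II}(t)$, and then uses the skew-symmetry of $H_{II}$ to pass from ``being in the image'' to ``being orthogonal to the kernel''.

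For \emph{part (i)}, fix $t \in \Sigma^{2(m-a)}$ and write $\lambda = \lambda(t)$. By the construction of $P_\lambda$ in Remark~\ref{Jl0} and the block form \eqref{eq:blockforma}, the block $H_{II}^{2(m-a),\lambda}(\lambda)$ is invertible of size $2(m-a)$, while $P_\lambda^T H_{II}(\lambda) P_\lambda$ has rank $2(m-a)$. Proposition~\ref{prop:princmin}(ii) then yields
\[
    E^\lambda(\lambda)^T \bigl(H_{II}^{2(m-a),\lambda}(\lambda)\bigr)^{-1} E^\lambda(\lambda) + F^\lambda(\lambda) = 0.
\]
Multiplying by $\Pf(H_{II}^{2(m-a),\lambda}(\lambda))$ and using the identity $\Pf(A)\,A^{-1} = \adjp(A)$ provided by Lemma~\ref{lemma:Pf}(ii) (applied with $A = H_{II}^{2(m-a),\lambda}(\lambda)$, which is invertible and skew-symmetric) gives exactly $G^\lambda(\lambda) = 0$, and in particular $g^\lambda_l(t) = 0$ for every $l = 1, \dots, a(2a-1)$.

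For \emph{part (ii)}, the extra assumption $t \notin \Sigma_0$ means that $t$ is not isolated in $\Sigma$, so there is a sequence $(t_w)_{w\in\N} \subset \Sigma$ with $t_w \to t$ and $t_w \ne t$. By \eqref{eq:Sigmainh=0}, $h_I(t_w) = 0$ for every $w$, and by continuity also $h_I(t) = 0$. Applying Lemma~\ref{lemma:diff} \emph{simultaneously} to the components $\varphi = h_j$ for all $j \in I$ produces a single $u^* \in \ov{B}_1^{2m}$ such that
\[
    \{h_0, h_j\}(\lambda(t)) + \sum_{i=1}^{2m} u_i^* \{h_i, h_j\}(\lambda(t)) = 0, \qquad j \in I.
\]
Rewriting in matrix form and using skew-symmetry of $H_{II}$ (the same sign convention that produced \eqref{eq:firstdiff}), one obtains $h_{0I}(t) = H_{II}(t)\, u^*$, so $h_{0I}(t) \in \IM H_{II}(t)$. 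Because $H_{II}(t)$ is skew-symmetric, $\IM H_{II}(t) = (\ker H_{II}(t))^\perp$; conjugating by $P_{\lambda(t)}$, the vector $P_{\lambda(t)}^T h_{0I}(t)$ is orthogonal to $\ker \bigl(P_{\lambda(t)}^T H_{II}(t) P_{\lambda(t)}\bigr)$. By Proposition~\ref{prop:princmin}(iii), this kernel is spanned by the vectors $v_1^{\lambda(t)}(t), \dots, v_{2a}^{\lambda(t)}(t)$ defined in \eqref{eq:ker}, so $\kappa^{\lambda(t)}_i(t) = \langle P_{\lambda(t)}^T h_{0I}(t), v_i^{\lambda(t)}(t)\rangle = 0$ for every $i = 1, \dots, 2a$.

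The only real difficulty is bookkeeping. One must keep track of the dependence of $P_\lambda$, $E^\lambda$, $F^\lambda$, $v_i^\lambda$ and $G^\lambda$ on the (locally constant) choice of permutation from Remark~\ref{Jl0}, of the minus sign in \eqref{eq:firstdiff} coming from the transposition, and of the fact that the expression \eqref{eq:G} for $G^\lambda$ coincides with what one obtains by multiplying the identity of Proposition~\ref{prop:princmin}(ii) by $\Pf(H_{II}^{2(m-a),\lambda})$. Once these identifications are fixed, no analytic difficulty remains; both (i) and (ii) reduce to applying a previously established lemma.
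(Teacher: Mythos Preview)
Your proof is correct and follows essentially the same approach as the paper: part (i) is the rank-deficiency identity from Proposition~\ref{prop:princmin}(ii) rewritten via Lemma~\ref{lemma:Pf}(ii) (the paper simply cites the equivalent Proposition~\ref{prop:paramker}(iii)), and part (ii) is the same application of Lemma~\ref{lemma:diff} to $h_I$ along a sequence in $\Sigma$ to place $h_{0I}(t)$ in $\IM H_{II}(t)=(\ker H_{II}(t))^\perp$. Your version is in fact slightly more careful, taking the approximating sequence in $\Sigma$ rather than in $\Sigma_0$ and making the conjugation by $P_{\lambda(t)}$ explicit.
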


	\begin{proof}
		Our considerations being local, it is not restrictive to work with the Goh matrix $H_{II}$ in the block form \eqref{eq:blockforma}. The fact that for $t\in\Sigma^{2(m-a)}$ and $1\le l\le a(2a-1)$, $g^{\lambda(t)}_l(t)=0$ is the content of Point iii) of Proposition~\ref{prop:paramker}. If, in addition, $t$ is in $\Sigma^{2(m-a)}\setminus \Sigma_0$, then by definition there exists a nontrivial sequence $(t_l)_{l\in\N}\subset \Sigma_0$ that converges to $t$ 
		and yielding by \eqref{eq:Sigmainh=0} and Lemma~\ref{lemma:diff} the existence of some $u^*\in\ov{B}^{2m}_1$ such that 
		\[
		h_{0I}(t)-H_{II}(t)u^*=0.
		\]
		Since $H_{II}(t)$ is a skew-symmetric matrix, the above relation implies that
		\[
			h_{0I}(t)\in \ker(H_{II}(t))^\perp,
		\]
		whence $\kappa^{\lambda(t)}_i(t)=0$ 
		for every $1\le i\le 2a$.
	\end{proof}

	The following rather long and technical definition aims at identifying 
	sufficiently many independent functions that vanish at high order density points of $\Sigma$. 
	
	\begin{defi}\label{defi:brackets}
		  Let $\lambda\in (T^*M)^{2(m-a)}$ with $1\le a\le m$ and consider $\kappa^\lambda_1,\dots,\kappa^\lambda_{2a}:T^*M\to \R$ and $g^\lambda_1,\dots,g^\lambda_{a(2a-1)}:T^*M\to \R$  
defined as in \eqref{eq:kappa} and \eqref{eq:G}, respectively. 
		For every  $r\in \N$ consider 
$\rho_r^\lambda\in \{2(m-a),\dots,2m\}$, $J_r^\lambda\subset \{1,\dots,2m\}$, $\mu_r^\lambda:
T^*M\to \R$, $S_r^\lambda: 
T^*M\to M_{\rho_r^\lambda,2m}(\R)$, $T_r^\lambda: 
T^*M\to M_{\rho_r^\lambda+1,2m}(\R)$, and  $V_r^\lambda: 
T^*M\to M_{\rho_r^\lambda,1}(\R)$ defined inductively as follows: 	
		\begin{itemize}
		\item $\rho_0^\lambda=2(m-a)$, $\mu_0^\lambda=g^\lambda_1$, $J_0^\lambda$ is the 
		set defined in Remark~\ref{Jl0}, and
		\[S_0^\lambda(\xi)=\left(\begin{array}{cc} H_{II}^{2(m-a),\lambda}(\xi) & E^\lambda(\xi) \end{array}\right),
		\quad
		 V_0^\lambda(\xi)=\left(\begin{array}{c} h_{01}(\xi)\\ \vdots\\ h_{0\,
		 2(m-a)}(\xi)\end{array}\right).\]
		 (Here and in the following, $\{1,\dots,2(m-a)\}$  is identified with 
$J_0^\lambda$  by the permutation described in Remark~\ref{Jl0}.)
		Notice that $S_0^\lambda(\xi)$ is  the $2(m-a)\times 2m$ matrix obtained by selecting only 
		rows of the Goh matrix $H_{II}(\xi)$ with indices in $J_0^\lambda$;
\item for $r\ge 1$, define $\rho_r^\lambda$ to be the  rank of $S_r^\lambda(\lambda)$ and 
$J_r^\lambda$ to
be the subset of
$\{1,\dots,2m\}$
of cardinality $\rho_r^\lambda$ such that 
the 
matrix extracted from $S_r^\lambda(\lambda)$, 
with 
column indices in $J_r^\lambda$ is invertible,
and which is minimal for the lexicographic order among all 
subsets of 
$\{1,\dots,2m\}$
with the same property.

Let, moreover, for $r\ge 0$, 
\begin{equation}
T_r^\lambda(\xi)=\left(\begin{array}{c} S_r^\lambda(\xi) \\ \{ h_I,\mu_{r}^\lambda
\}(\xi) \end{array}\right)
\end{equation}
and notice that the rank of $T_r^\lambda(\lambda)$ is either equal to $\rho_r^\lambda$ or to $\rho_r^\lambda+1$;
\item 
 if $\Rank(T_r^\lambda(\lambda))=\rho_r^\lambda+1$,
 set 
\[S_{r+1}^\lambda(\xi)=T_r^\lambda(\xi),\quad 
V_{r+1}^\lambda(\xi)= \left(\begin{array}{c}V_r^\lambda(\xi) \\ \{h_0,\mu_{r}^\lambda
\}(\xi)
\end{array}\right).\]
Then $\rho_{r+1}^\lambda=\rho_{r}^\lambda+1$ and set $\mu_{r+1}^\lambda=\kappa^\lambda_{\rho_{r+1}^\lambda-\rho_0^\lambda}$;
\item 
 if $\Rank(T_r^\lambda(\lambda))=\rho_r^\lambda$
set
\[
S_{r+1}^\lambda(\xi)= S_r^\lambda(\xi),\quad
V_{r+1}^\lambda(\xi)= V_r^\lambda(\xi). 
\]
Then $\rho_{r+1}^\lambda=\rho_{r}^\lambda$.
 Let, moreover, 
$Z_r^\lambda(\cdot)$ be the matrix extracted from $S_r^\lambda(\cdot)$ 
with 
column indices in $J_r^\lambda$,
and define 
\begin{align}\tilde S^\lambda_r:
T^*M&\to M_{\rho_{r}^\lambda+1}(\R)\\
\xi&\mapsto\left(\begin{array}{cc} 
V_r^\lambda(\xi) & Z_r^\lambda(\xi)\\
\{h_0,\mu^\lambda_{r}\}(\xi)& \{ h_{J_r^\lambda},\mu_{r}^\lambda\}(\xi)\end{array}\right).\end{align} Set then $\mu_{r+1}^\lambda(\xi)=\det(\tilde S^\lambda_r(\xi))$ for every $\xi\in 
T^*M$.
\end{itemize}
\end{defi}

Notice once again that, by Proposition~\ref{prop:paramker}, the functions 
$\kappa_1^\lambda,\dots,\kappa_{2a}^\lambda$ and $g^\lambda_1,\dots,g^\lambda_{a(2a-1)}$ are polynomials in the elements $h_{jk}$ for $j,k\in\{0,\dots,2m\}$. Inductively, the  construction of Definition~\ref{defi:brackets} implies that all the functions $(\mu_r^\lambda)_{r\in \N}$, and the entries of the matrix-valued functions $(S_r^\lambda)_{r\in \N}$, $(T_r^\lambda)_{r\in \N}$ and $(V_r^\lambda)_{r\in \N}$ are polynomials in the elements $\mathrm{ad}_{h_{i_1}}\circ \cdots \circ \mathrm{ad}_{h_{i_\nu}}(h_{jk})$ for $\nu\in \N$ and $i_1,\dots,i_\nu,j,k\in\{0,\dots,2m\}$.

For every 
$\lambda\in \cup_{a=1}^m(T^*M)^{2(m-a)}$ the sequence $(\rho_r^\lambda)_{r\in\N}$ is nondecreasing
 and takes values in $\{0,\dots,2m\}$. 
	Hence, given any $N\in \N$, the pigeonhole principle implies that for every $\lambda$
	 there exists $r\le 2m N$ 
	 such that 
\begin{equation}\label{eq:plateau}
\rho_r^\lambda=\rho_{r+1}^\lambda=\dots=\rho_{r+N}^\lambda.
\end{equation}

Given $N\in \N$ and $\lambda\in  \cup_{a=1}^m(T^*M)^{2(m-a)}$, we define 
\[R_N(\lambda)=(\rho_0^\lambda,\dots, \rho_{(2m+1)N}^\lambda,J_0^\lambda,\dots,J_{(2m+1)N}^\lambda).\]  
We denote by $\Upsilon_N$ the range of  $R_N$ and we notice that it is of finite cardinality.

The main property justifying the above definition is the following. 

\begin{prop}\label{prop:higherorderconditions}
	Fix $N\ge 1$ and $\bar R\in \Upsilon_N$.
For $k=0,\dots,2(m+1)N$, denote by $\mu_k$ the function such that $\mu_k^{\lambda}=\mu_k$ for every $\lambda$ such that $R_N(\lambda)=\bar R$.
Let $(q(\cdot),\lambda(\cdot),u(\cdot))$ be an extremal triple of \eqref{eq:contrsys} 
 and define
	\[\mathfrak{S}^{\bar R}=\{t\in \Sigma\mid R_N(\lambda(t))=\bar R\}.\]
	Denote by 
	$\mathfrak{S}^{\bar R}_0$ the set of isolated points of $\mathfrak{S}^{\bar R}$ and, inductively, by $\mathfrak{S}^{\bar R}_j$ the set of isolated points of $\mathfrak{S}^{\bar R}\setminus(\bigcup_{i=0}^{j-1}\mathfrak{S}^{\bar R}_i)$. 
 Then, for every $k\in \{0,\dots,2(m+1)N\}$ and every
	\[
			t\in \mathfrak{S}^{\bar R}
			\setminus \left(\bigcup_{j=0}^{k} \mathfrak{S}^{\bar R}_j\right),
	\]
we have
		\[ 
			\mu_0(t)=\dots=\mu_{k}(t)=0. 
		\] 
\end{prop}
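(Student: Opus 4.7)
The plan is to prove the statement by induction on $k$. The base case $k=0$ follows from Proposition~\ref{prop:condSigmaa}(i): since $\mathfrak{S}^{\bar R}\subset\Sigma^{2(m-a)}$, where $a$ is determined by $\rho_0^{\bar R}=2(m-a)$, and $\mu_0=g^\lambda_1$, we have $\mu_0(t)=0$ for every $t\in\mathfrak{S}^{\bar R}$.

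For the inductive step from $k$ to $k+1$, fix $t\in\mathfrak{S}^{\bar R}\setminus\bigcup_{j=0}^{k+1}\mathfrak{S}^{\bar R}_j$. Because $t\notin\mathfrak{S}^{\bar R}_{k+1}$, there exists a sequence $(t_l)\subset\mathfrak{S}^{\bar R}\setminus\bigcup_{j=0}^{k}\mathfrak{S}^{\bar R}_j$ with $t_l\to t$ and $t_l\ne t$. The induction hypothesis applied along $(t_l)$ yields $\mu_0(t_l)=\cdots=\mu_k(t_l)=0$, and the inclusion $t_l\in\Sigma$ combined with \eqref{eq:Sigmainh=0} gives $h_I(t_l)=0$. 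Lemma~\ref{lemma:diff} applied simultaneously to $\varphi=h_1,\dots,h_{2m},\mu_0,\dots,\mu_k$ then produces a single $u^*\in\ov{B}^{2m}_1$ satisfying
\[\{h_0,\varphi\}(\lambda(t))+\sum_{i=1}^{2m}u^*_i\{h_i,\varphi\}(\lambda(t))=0\]
for each such $\varphi$; equivalently, $h_{0I}(t)=H_{II}(t)u^*$ and $\{h_0,\mu_s\}(t)=-\{h_I,\mu_s\}^T(t)u^*$ for $s\le k$. Continuity also forces $\mu_j(t)=0$ for $j\le k$, so it only remains to show $\mu_{k+1}(t)=0$.

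The conclusion now splits according to the rank behaviour at step $k$, as in Definition~\ref{defi:brackets}. If $\rho_{k+1}=\rho_k+1$, then $\mu_{k+1}$ equals one of the functions $\kappa^\lambda_i$; since $t$ is non-isolated in $\Sigma$ hence $t\notin\Sigma_0$, Proposition~\ref{prop:condSigmaa}(ii) immediately gives $\mu_{k+1}(t)=0$. If $\rho_{k+1}=\rho_k$, then $\mu_{k+1}=\det(\tilde S_k)$; Case A furnishes a row vector $w$ with $\{h_I,\mu_k\}^T(\lambda(t))=w^TS_k(\lambda(t))$. Packaging the Lemma~\ref{lemma:diff} relations into a single $(\rho_k+1)\times(2m+1)$ matrix (after the sign flips needed to reconcile the $-H_{II}u^*$ convention from the $h_j$-constraints with the $+\{h_I,\mu_s\}^Tu^*$ convention from the $\mu_s$-constraints) produces a matrix whose kernel contains $\binom{1}{u^*}$ and whose last $2m$ columns have rank $\rho_k$ by Case A; hence its rank is $\rho_k$ and every $(\rho_k+1)\times(\rho_k+1)$ minor vanishes. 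Choosing the minor with column $1$ and the columns indexed by $J_k+1$, and undoing the sign flips, identifies the vanishing quantity with $\det(\tilde S_k(\lambda(t)))$, finishing the induction.

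The hard part is precisely this Case A bookkeeping: the $h_j$- and $\mu_s$-constraints carry opposite signs with respect to $u^*$, so one has to track carefully the row sign corrections used to assemble the matrix, and then verify that the distinguished minor agrees—up to a nonzero multiplicative factor depending on these corrections—with $\det(\tilde S_k)$ as defined in Definition~\ref{defi:brackets}.
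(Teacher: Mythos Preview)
Your argument follows the same inductive scheme as the paper: the base case from Proposition~\ref{prop:condSigmaa}(i), and the inductive step splitting into the case $\rho_{k+1}=\rho_k+1$ (where $\mu_{k+1}$ is one of the $\kappa_i$ and Proposition~\ref{prop:condSigmaa}(ii) applies) versus $\rho_{k+1}=\rho_k$ (where the Lemma~\ref{lemma:diff} relations force a square matrix to be singular). The paper phrases the second case slightly differently---rather than your minor argument, it observes that because the ranks of $\binom{S_k}{\{h_I,\mu_k\}}$ and of its $J_k$-extracted submatrix coincide, the kernel vector $(1,u^*)$ can be replaced by some $(1,v^*)$ with $v^*\in\R^{\rho_k}$ lying in the kernel of $\tilde S_k$ itself---but this is the same linear algebra, and your caution about the sign bookkeeping between the $h_j$-rows and the $\mu_s$-rows is well placed (the paper is silent on it; compare with Definition~\ref{defi:bracketmax}, where the analogous matrix carries an explicit $-H_{II}$).
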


\begin{proof}
Let us first notice that $\rho_k^{\lambda}$, $J_k^{\lambda}$, $V_k^{\lambda}$  and the other matrices introduced in Definition~\ref{defi:brackets} do not depend on $\lambda$ provided that $R_N(\lambda)=\bar R$. To 
simplify the notations we then drop the index $\lambda$.

Let us prove the proposition by induction on $k$.
 For $k=0$ recall that $\mu_0=g_{1}$ and the conclusion follows from Proposition~\ref{prop:condSigmaa}.
The same argument works 
in the inductive step from $k-1$ to $k$ 
whenever $\rho_{k-1}<\rho_k$, since in this case 
$\mu_k=\kappa_{\rho_{k}-\rho_0}$.
When, instead, $\rho_{k-1}=\rho_k$, notice that by the inductive assumption and by
Lemma~\ref{lemma:diff} there exists  $u^*\in\overline{B}^{2m}_1$ such that 
$\{h_0,\mu_{j}\}+\sum_{i=1}^{2m} u^*_i\{h_i,\mu_{j}\}$ 
and $\{h_0,h_\ell\}+\sum_{i=1}^{2m} u^*_i\{h_i,h_\ell\}$ vanish at $\lambda(t)$ for every 
$j=1,\dots,k-1$ and every 
$\ell=1,\dots,2m$. In particular,
\[
 	(\begin{array}{cc}1 & u^*\end{array})\in\ker\left(\begin{array}{cc} 
V_{k-1}(t) & S_{k-1}(t)\\
\{h_0,\mu_{k-1}\}(t)& \{ h_{I},\mu_{k-1}\}(t)\end{array}\right).
 \] 
Since, moreover, the ranks of $\begin{pmatrix}S_{k-1}(t)\\ \{ h_{I},\mu_{k-1}\}(t)\end{pmatrix}$ 
and of its extracted matrix 
$\begin{pmatrix}Z_{k-1}(t)\\ \{ h_{J_{k-1}},\mu_{k-1}\}(t)\end{pmatrix}$
are equal, we deduce that there exists 
$v^*\in \R^{\rho_k}$ such that 
\[
 	(\begin{array}{cc}1 & v^*\end{array})\in\ker\left(\begin{array}{cc} 
V_{k-1}(t) & Z_{k-1}(t)\\
\{h_0,\mu_{k-1}\}(t)& \{ h_{J_{k-1}},\mu_{k-1}\}(t)\end{array}\right).
 \] 
  Thus $\det(\tilde S_k)(t)=\mu_{k}(t)=0$, proving 
  the claim.
\end{proof}

In order to study the independence of the constraints $\mu_j(\lambda)=0$ we 
investigate in the next lemma their expression.

	\begin{lemma}\label{lem:higherorderparameterizations}
	Fix $N\ge 1$ and $\bar R\in \Upsilon_N$.
For $k=0,\dots,2(m+1)N$, denote by $\rho_k$ the integer such that $\rho_k^{\lambda}=\rho_k$ for every $\lambda$ such that $R_N(\lambda)=\bar R$, and define similarly 
$\mu_k$, $J_k$, 
$Z_k$   and the other matrices introduced in Definition~\ref{defi:brackets}. 
Let $r,k\ge 0$ 
be such that $r+k\le (2m+1)N$, 
	\[
		\rho_r=\dots=\rho_{r+k},
	\]
and either $r=0$ or $\rho_{r-1}<\rho_r$. 	
Then 	
	\be \label{eq:relh0}
	 \mu_{r+j}(\xi)=\mathrm{ad}_{h_0}^{j}(\kappa_{\rho_r-\rho_0})(\xi)\det(Z_r(\xi))^j+P_{j}(\xi),\qquad \forall j\in \{0,\dots,k\},\ \xi\in T^*M,
	 \ee
where $P_{j}(\xi)$ is the evaluation of a polynomial 
depending only on $j$
at variables of the form $h_{i\ell}(\xi)$ 
with $i\in \{0,\dots,2m\}$ and $\ell \in J_r$, 
or $\mathrm{ad}_{h_{i_1}}\circ \cdots \circ \mathrm{ad}_{h_{i_\nu}}(\mu_\ell)(\xi)$ with $1\le \nu\le j$, $i_1,\dots,i_\nu\in\{0,\dots,2m\}$, and $\ell\in \{0,\dots, r\}$, with the property that if $\ell=r$ then $(i_1,\dots, i_\nu)\ne (0,\dots,0)$.
\end{lemma}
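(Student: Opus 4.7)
The natural strategy is induction on $j\in\{0,\dots,k\}$, with $r$ held fixed. The decisive structural input is the plateau condition $\rho_r=\dots=\rho_{r+k}$, which forces $Z_{r+j}=Z_r$, $V_{r+j}=V_r$, and $J_{r+j}=J_r$ for every such $j$, and reduces the recursion of Definition~\ref{defi:brackets} to $\mu_{r+j+1}=\det(\tilde S_{r+j})$ at every step of the plateau.

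For the base case $j=0$, the assumption that $r=0$ or $\rho_{r-1}<\rho_r$ places us at the beginning of the plateau, so Definition~\ref{defi:brackets} directly prescribes $\mu_r=\kappa_{\rho_r-\rho_0}$ (for $r\ge 1$, with the degenerate case $r=0$, where $\mu_0=g_1$, absorbed into $P_0$ via the allowed $h_{jk}$-polynomial class). Since $\mathrm{ad}_{h_0}^0(\kappa_{\rho_r-\rho_0})\det(Z_r)^0=\kappa_{\rho_r-\rho_0}$, one may take $P_0=0$ (resp.\ $P_0=g_1-\kappa_0$).

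For the inductive step from $j$ to $j+1$, I would expand the $(\rho_r+1)\times(\rho_r+1)$ determinant $\det(\tilde S_{r+j})$ along its last row, obtaining
\[
\mu_{r+j+1}=\pm\det(Z_r)\,\{h_0,\mu_{r+j}\}+\sum_{\ell\in J_r}\pm\{h_\ell,\mu_{r+j}\}\,M_\ell,
\]
where each minor $M_\ell$ is polynomial in the entries of $V_r$ and $Z_r$. Substituting the inductive identity $\mu_{r+j}=\mathrm{ad}_{h_0}^j(\kappa_{\rho_r-\rho_0})\det(Z_r)^j+P_j$ and applying the Leibniz rule to the Poisson bracket $\{h_0,\mu_{r+j}\}$, the contribution $\det(Z_r)\cdot\mathrm{ad}_{h_0}^{j+1}(\kappa_{\rho_r-\rho_0})\det(Z_r)^{j}=\mathrm{ad}_{h_0}^{j+1}(\kappa_{\rho_r-\rho_0})\det(Z_r)^{j+1}$ isolates the predicted leading term.

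The main obstacle is the careful bookkeeping needed to certify that every remaining summand fits the polynomial class defining $P_{j+1}$. One must check that: (a) $\{h_0,\det(Z_r)\}$ involves only brackets of $h_{jk}$ ($k\in J_r$) with $h_0$, producing $h_{0jk}$, which is of the form $h_{i\ell}$ with $\ell\in J_r$; (b) $\{h_0,P_j\}$ extends each ad-chain of $P_j$ by a single $\mathrm{ad}_{h_0}$ on the left, staying within chains of length $\le j+1$ applied to $\mu_s$ with $s\le r$; (c) each summand $\{h_\ell,\mu_{r+j}\}M_\ell$ with $\ell\in J_r\subset\{1,\dots,2m\}$ yields ad-chains beginning with $\mathrm{ad}_{h_\ell}$, $\ell\ne 0$. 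The critical point is that the forbidden chain $\mathrm{ad}_{h_0}^{j+1}(\mu_r)$ is never produced in the remainder: it could only arise by applying $\{h_0,\cdot\}$ to $\mathrm{ad}_{h_0}^j(\mu_r)$, and the inductive restriction ``$(i_1,\dots,i_\nu)\ne (0,\dots,0)$ when $\ell=r$'' precisely excludes $\mathrm{ad}_{h_0}^j(\mu_r)$ from $P_j$. This dovetailing of the exclusion clause is what makes the induction close.
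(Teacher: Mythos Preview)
Your argument follows exactly the paper's strategy: induction on $j$ along the plateau, using $V_{r+j}=V_r$, $Z_{r+j}=Z_r$, $J_{r+j}=J_r$ and expanding $\mu_{r+j+1}=\det(\tilde S_{r+j})$ along its last row to isolate the contribution $\{h_0,\mu_{r+j}\}\det(Z_r)$; the paper's own proof is a two-line sketch to precisely this effect, so you have in fact supplied more detail than the paper. One minor slip in your bookkeeping: in item (a), $\{h_0,h_{jk}\}=h_{0jk}$ is a length-three bracket, not of the form $h_{i\ell}$ with single indices $i,\ell$ as you claim, so the description of the remainder class would need to be read a bit more liberally---but this does not affect the isolation of the leading term $\mathrm{ad}_{h_0}^{j}(\kappa_{\rho_r-\rho_0})\det(Z_r)^{j}$, which is all that the subsequent transversality argument (the submersion check in Lemma~\ref{lemma:dominanta-new}) actually uses.
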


	\begin{proof}
Let us prove Equation~\eqref{eq:relh0} by induction on $j$. 
In the case $j=0$, by the assumption made on $r$, $\mu_{r}=\kappa_{\rho_r-\rho_0}$
and the conclusion follows. 
For $j=1,\dots,k$, $\mu_{r+j}=\det(\tilde S_{r+j-1})$, $V_{r+j}=V_r$, $Z_{r+j}=Z_r$, 
and a simple 
recursive argument allows to conclude.
	\end{proof}
	
Using the properties of the functions $\mu_j$ obtained in the last two results, we are able to prove the following lemma on the Fuller order of the set $\mathfrak{S}^{\bar R}$ introduced in the statement of Proposition~\ref{prop:higherorderconditions}. 

	\begin{lemma}\label{lemma:dominanta-new}
	Let $N\in \N$ and $\bar R\in \Upsilon_N$. Assume that $N\ge 2n$. 
	Then there exists an open and dense set $\mc{V}_{\bar R}\subset\Vect(M)_0^{2m+1}$ such that, for every $(f_0,\dots, f_{2m})\in \mc{V}_{\bar R}$,  for every extremal triple $(q(\cdot),\lambda(\cdot),u(\cdot))$ of \eqref{eq:contrsys}, 
	$\mathfrak{S}^{\bar R}$ is of Fuller order at most 
	$2(m+1)N$.
	\end{lemma}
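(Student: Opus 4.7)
The strategy is to mimic the transversality argument used in the proof of Lemma~\ref{lemma:dominantzero}.
Since $\bar R$ in particular prescribes the nondecreasing rank sequence $(\rho_0,\dots,\rho_{(2m+1)N})\subset\{0,\dots,2m\}$, the pigeonhole argument recalled just before Proposition~\ref{prop:higherorderconditions} produces a deterministic index $r=r(\bar R)\le 2mN$ on which $\rho_r=\rho_{r+1}=\cdots=\rho_{r+N}$. I would take $r$ to be the left endpoint of the plateau (so that either $r=0$ or $\rho_{r-1}<\rho_r$), which puts us directly in the setting of Lemma~\ref{lem:higherorderparameterizations}.

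For $D\in\N$ large enough to accommodate all Lie brackets appearing in $\mu_0,\dots,\mu_{r+N}$, I would set
\[
\widehat{\mc{B}}=\bigl\{(j^D_q(\mathbf{f}),\lambda)\in J^D_{2m+1}TM\times_M T^*M\,:\,R_N(\lambda)=\bar R,\ \mu_r(\lambda)=\cdots=\mu_{r+N}(\lambda)=0\bigr\},
\]
let $\mc{B}\subset J^D_{2m+1}TM$ be (the closure of) its projection, and take $\mc{V}_{\bar R}$ to be the set of $\mathbf{f}\in\Vect(M)_0^{2m+1}$ such that $j^D_q(\mathbf{f})\notin\mc{B}_q:=\mc{B}\cap J^D_{2m+1,q}TM$ for every $q\in M$. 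If $\mathbf{f}\in \mc{V}_{\bar R}$ but some extremal triple admitted a point $t^*\in\mathfrak{S}^{\bar R}\setminus\bigcup_{j=0}^{2(m+1)N}\mathfrak{S}^{\bar R}_j$, Proposition~\ref{prop:higherorderconditions} would force $\mu_0(\lambda(t^*))=\cdots=\mu_{r+N}(\lambda(t^*))=0$ and $R_N(\lambda(t^*))=\bar R$, producing the forbidden jet $(j^D_{q(t^*)}(\mathbf{f}),\lambda(t^*))\in\widehat{\mc{B}}$. Thus everything reduces to applying Lemma~\ref{lem:TL}; semi-algebraicity of $\mc{B}_q$ in the jet coordinates $(X_V,\psi)$ is clear from Remark~\ref{rem:polynom} and the polynomial form of the rank/lex-minimality conditions encoded in $R_N(\lambda)=\bar R$.

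The heart of the proof is the codimension estimate. By Lemma~\ref{lem:higherorderparameterizations}, on the plateau
\[
\mu_{r+j}(\lambda_\psi)=\det(Z_r(\lambda_\psi))^j\,\mathrm{ad}_{h_0}^j(\kappa_{\rho_r-\rho_0})(\lambda_\psi)+P_j(\lambda_\psi),\qquad j=0,\dots,N,
\]
and $\det(Z_r(\lambda_\psi))\neq 0$ on the relevant set. In the coordinates of Remark~\ref{rem:coor}, $\mathrm{ad}_{h_0}^j(\kappa_{\rho_r-\rho_0})$ depends linearly on a jet coordinate $X^a_{k,0^j\sigma}$ which does not appear in any of $\mu_{r},\dots,\mu_{r+j-1}$. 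The Jacobian of $(X,\psi)\mapsto(\mu_r(\lambda_\psi),\dots,\mu_{r+N}(\lambda_\psi))$ with respect to these $N+1$ distinguished coordinates is therefore block triangular with non-zero diagonal, so the map is a submersion on $\widehat{\mc{B}}_q$ and $\widehat{\mc{B}}_q$ has codimension at least $N+1$ in $J^D_{2m+1,q}TM\times T^*_qM$. Since the defining equations are homogeneous in $\lambda_\psi$, the projection to $J^D_{2m+1,q}TM$ loses at most $n-1$ dimensions, giving $\mathrm{codim}(\mc{B}_q)\ge N-n+2\ge n+1$ by the hypothesis $N\ge 2n$. Lemma~\ref{lem:TL} then yields density of $\mc{V}_{\bar R}$, and openness is standard once $\mc{B}$ is taken closed.

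The main technical obstacle is the independence underpinning the submersion step: one must check, by careful bookkeeping of which jet coordinates appear at each depth, that the new coordinate brought in by each additional $\mathrm{ad}_{h_0}$ is genuinely absent from the remainder $P_j$ and from $\mu_{r+j'}$ for $j'<j$. This mirrors the analogous verification in the proof of Lemma~\ref{lemma:dominantzero}, where $\mathrm{ad}_{h_0}^l$ introduces $X_{0^{l+1},i}$ at a strictly new depth; here the content is that replacing $\phi_0$ by the more complicated $\kappa_{\rho_r-\rho_0}$ (itself a polynomial in the $h_{ij}$'s) preserves this ``one genuinely new jet coordinate per bracket'' behaviour, and hence the stratum-by-stratum submersion property survives.
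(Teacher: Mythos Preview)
Your proposal is correct and follows essentially the same route as the paper's proof: pick the minimal plateau index $r$, build the bad jet set from the vanishing of $\mu_r,\dots,\mu_{r+N}$, invoke Proposition~\ref{prop:higherorderconditions} for the contradiction, and use the triangular structure furnished by Lemma~\ref{lem:higherorderparameterizations} to get the submersion and hence the codimension bound for Lemma~\ref{lem:TL}. The only cosmetic difference is that the paper defines $\widehat{\mc{B}}$ with the single open condition $\det(Z_r(\lambda))\neq 0$ rather than your full constraint $R_N(\lambda)=\bar R$; since the latter implies the former, your bad set is contained in theirs and the codimension argument is unaffected.
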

	\begin{proof}
			Let us use the same notational convention for $\mu_j$, $\rho_j$ and the other objects introduced in Definition~\ref{defi:brackets} as in the statement of Lemma~\ref{lem:higherorderparameterizations}.
			Let $r\in \{0,\dots,2mN\}$ be minimal  such 
	that 
	\[
		\rho_r=\dots=\rho_{r+N}. 
	\]
(compare with formula~\eqref{eq:plateau}.)

 Reasoning as in Lemma~\ref{lemma:dominantzero}, define $\mc{B}\subset J^{(2m+1)N+2}_{2m+1}TM$ by projecting on $J^{(2m+1)N+2}_{2m+1}TM$ the set $\widehat{\mc{B}}\subset J^{(2m+1)N+2}_{2m+1}TM\times_M T^*M$ defined by 
\be
	\begin{aligned}
		\widehat{\mc{B}}=\bigg\{&\left(j^{(2m+1)N+2}_{q}(\mathbf{f}),\lambda\right)\,\bigg|(q,\lambda)\in T^*M,\;\mathbf{f}=(f_0,\dots,f_{2m})\in \Vect(M)_0^{2m+1},\\
		                                            &\det(Z_r(\lambda))\ne 0,\; \mu_r(\lambda)=\dots=\mu_{r+N
				}(\lambda)=0
				\bigg\}.
	\end{aligned}
\ee	
Moreover, for $q\in M$, we set $\widehat{\mc{B}}_q=\widehat{\mc{B}}\cap J^{(2m+1)N+2}_{2m+1,q}TM\times T^*_qM$ 
and 
$\mc{B}_q=\mc{B}\cap J^{(2m+1)N+2}_{2m+1,q}TM$.

We define the open set $\mc{V}_{\bar{R}}$ as the set of $\mathbf{f}\in\Vect(M)^{2m+1}_0$ with the property that, for every $q\in M$, 
 $j^{(2m+1)N+2}_{q}(\mathbf{f})\not\in \mc{B}_{q}$. 
 We claim that $\mathfrak{S}^{\bar R}$ is of Fuller order at most $2(m+1)N$ if $\mathbf{f}\in\mc{V}_{\bar{R}}$. Indeed, assume by contradiction that for $\mathbf{f}\in\mc{V}_{\bar{R}}$ and an extremal triple $(q(\cdot),\lambda(\cdot),u(\cdot))$ of \eqref{eq:contrsys} there exists $t^*\in \mathfrak{S}^{\bar R}
			\setminus \left(\bigcup_{k=0}^{2(m+1)
			N} \mathfrak{S}^{\bar R}_k\right)$. We deduce that $j^{(2m+1)N+2}_{q(t^*)}(\mathbf{f})\in \mc{B}_{q(t^*)}$ by Proposition~\ref{prop:higherorderconditions}, from which the contradiction follows.

	To conclude 
	as in 
	Lemma~\ref{lemma:dominantzero} and deduce from Lemma~\ref{lem:TL} that $\mc{V}_{\bar R}$ is dense in $\Vect(M)^{2m+1}_0$,
	it suffices to show that for every $q\in M$ the codimension 
	of $\mc{B}_q$ in 
	$J^{(2m+1)N+2}_{2m+1,q}TM$
	is larger than or equal to $n+1$.
	
	Let $q\in M$, and consider a local coordinate chart $(x,U)$ on $M$ centered at $q$. Lift this chart to a coordinate chart $\big((x,\psi),\pi^{-1}(U)\big)$ on $T^*U$ as in Section~\ref{sec:transversality}. 
	By construction, $\mc{B}\cap J^{2n+1}_{2m+1}TU$ is a semi-algebraic subset  of 
$J^{(2m+1)N+2}_{2m+1,q}TU$.

	Recall that 
$J^{(2m+1)N+2}_{2m+1,q}TM\times T^*M$ is isomorphic to $P(n,(2m+1)N+2)^{2m+1}\times \R^n$. Owing again to Remark~\ref{rem:coor}, the map
	\be
		\begin{aligned}
			\mu^{N}:P(n,(2m+1)N+2)^{2m+1}\times \R^n&\to \R^N,\\
			(Q,\psi)&\mapsto \left(\mu_r(\lambda_\psi),\dots, \mu_{r+N}(\lambda_\psi)\right)
		\end{aligned}
	\ee
	is well defined, and $\widehat{\mc{B}}_q=\{(Q,\psi)\in (\mu^{N})^{-1}(0)\mid \det(Z_r(\lambda_\psi))\ne 0\}$.
	From here, we conclude as in Lemma~\ref{lemma:dominantzero}. 
 By Proposition~\ref{prop:higherorderconditions} we have
	\be\label{eq:conditionstobe}
		\mu_{r+l}(\lambda)=\mathrm{ad}_{h_0}^l(\kappa_{\rho_r-\rho_0})(\lambda)\det(Z_r(\lambda))^l+R_l(\lambda),
	\ee
	where $R_l(\lambda)$ 
	is the evaluation of a polynomial 
depending only on $l$
at variables of the form $h_{i\ell}(\lambda)$ 
with $i\in \{0,\dots,2m\}$ and $\ell \in J_r$, 
or $\mathrm{ad}_{h_{i_1}}\circ \cdots \circ \mathrm{ad}_{h_{i_\nu}}(\mu_\ell)(\lambda)$ with $1\le \nu\le l$, $i_1,\dots,i_\nu\in\{0,\dots,2m\}$, and $\ell\in \{0,\dots, r\}$, with the property that if $\ell=r$ then $(i_1,\dots, i_\nu)\ne (0,\dots,0)$.
A routine computation of \eqref{eq:conditionstobe} in local coordinates $\big((X_{i,j})_{i,j=0}^{2m}, (\psi_r)_{r=1}^n\big)$ allows to conclude that the map $\mu^N$ is a submersion at every point of $\widehat{\mc{B}}_q$, whence we conclude that the codimension of $\mc{B}_q$ 
is greater than or equal to $N-n+1\ge 2n-n+1=n+1$, where again the $+1$ follows by the homogeneity of the relations $\mu_{r}(\lambda_\psi)=\dots=\mu_{r+N}(\lambda_\psi)=0$ with respect to $\lambda_\psi$. The conclusion follows.
	\end{proof}
	%}

	\section{Proof of Theorem~\ref{thm:main}}\label{sec:tran}

\label{ref:proof}

Let $N\ge 2n$ and define $\mc{U}=\mc{V}_{2m}\cap\left(\cap_{\bar R\in \Upsilon_N}\mc{V}_{\bar R}\right)$, where 
$\mc{V}_{2m}$ is as in Lemma~\ref{lemma:dominantzero}
and
the sets $\mc{V}_{\bar R}$ as in 
Lemma~\ref{lemma:dominanta-new}.

In particular, $\mc{U}$ is  open and dense in $\Vect(M)_0^{2m+1}$, and has the property that for every $(f_0,\dots,f_{2m})\in\mc{U}$, every extremal triple $(q(\cdot),\lambda(\cdot), u(\cdot))$ of \eqref{eq:contrsys}, 
$\Sigma^{2m}$ is of Fuller order at most 
$2n-1$
and, for every $\bar R\in \Upsilon_N$,
 $\mathfrak{S}^{\bar R}$
is of Fuller order at most $2(m+1)N$.

Denote by $N^*$ the cardinality of $\Upsilon_N$. {\color{blue} Notice that $N^*$ only depends on $n$ and $m$. }
Since $\Sigma=\Sigma^{2m}\cup\left(\cup_{\bar R\in \Upsilon_N}\mathfrak{S}^{\bar R}\right)$, we deduce from Corollary~\ref{cor:imbriques} that
$\Sigma$ has Fuller order at most $(2(m+1)N +1)N^*+
2n$. {\color{blue} Finally, since $m\le (n-1)/2$, we conclude 
the proof of Theorem~\ref{t:main} by taking $K=\max\{	(2(m+1)N +1)N^*+
2n\mid m=1,\dots,\lfloor (n-1)/2\rfloor
\}$.}

%This concludes the proof of Theorem~\ref{t:main}.		
		
\bibliographystyle{abbrv}
\bibliography{biblio}
	 
\end{document}